\title[Endomorphisms of the Cuntz Algebras and the Thompson Groups]{Endomorphisms of the Cuntz Algebras and \\ the Thompson Groups}
\author{Sel\c{c}uk Barlak}
\author{Jeong Hee Hong}
\author{Wojciech Szyma{\'n}ski}
\date{\today}
\subjclass{20E36, 46L05, 46L40}
\keywords{Thompson's groups, Cuntz algebra, endomorphism}
\thanks{Jeong Hee Hong was supported by Basic Science Research Program through 
the National Research Foundation of Korea (NRF) funded by the Ministry of Education, Science and 
Technology (Grant No. 2016R1D1A1B03930839). 
Sel\c{c}uk Barlak and Wojciech Szyma\'{n}ski were partially supported by 
the Villum Fonden Research Project `Local and global structures of groups and their algebras' (2014--2018).}
\begin{document}

\maketitle

\begin{abstract}
We investigate the relationship between endomorphisms of the Cuntz algebra ${\mathcal O}_2$ and endomorphisms of the Thompson groups 
$F$, $T$ and $V$ represented inside the unitary group of ${\mathcal O}_2$. For an endomorphism $\lambda_u$ of ${\mathcal O}_2$, we show 
that $\lambda_u(V)\subseteq V$ if and only if $u\in V$. If $\lambda_u$ is an automorphism of ${\mathcal O}_2$ then $u\in V$ is 
equivalent to $\lambda_u(F)\subseteq V$. Our investigations are facilitated by introduction of the concept of modestly scaling endomorphism of 
${\mathcal O}_n$, whose properties and examples are investigated. 
\end{abstract}

% math
\renewcommand\matrix[1]{\left(\begin{array}{*{10}{c}} #1 \end{array}\right)}  % Matrix
\newcommand\set[1]{\left\{#1\right\}}  % Menge
\newcommand\mset[1]{\left\{\!\!\left\{#1\right\}\!\!\right\}}

\def\D{{\mathcal D}}
\def\F{{\mathcal F}}
\def\J{{\mathcal J}}
\newcommand{\Nb}{{\mathbb{N}}}
\def\OO{{\mathcal O}}
\def\SS{{\mathcal S}}
\def\U{{\mathcal U}}
\def\W{{\mathcal W}}

\newcommand{\ff}{\mathfrak f}

\def\Ad{\operatorname{Ad}}
\def\aut{\operatorname{Aut}}
\def\en{\operatorname{End}}

%% Besondere Variablen
%Zahlmengen-Stil
\newcommand{\IA}[0]{\mathbb{A}} \newcommand{\IB}[0]{\mathbb{B}}
\newcommand{\IC}[0]{\mathbb{C}} \newcommand{\ID}[0]{\mathbb{D}}
\newcommand{\IE}[0]{\mathbb{E}} \newcommand{\IF}[0]{\mathbb{F}}
\newcommand{\IG}[0]{\mathbb{G}} \newcommand{\IH}[0]{\mathbb{H}}
\newcommand{\II}[0]{\mathbb{I}} \renewcommand{\IJ}[0]{\mathbb{J}}
\newcommand{\IK}[0]{\mathbb{K}} \newcommand{\IL}[0]{\mathbb{L}}
\newcommand{\IM}[0]{\mathbb{M}} \newcommand{\IN}[0]{\mathbb{N}}
\newcommand{\IO}[0]{\mathbb{O}} \newcommand{\IP}[0]{\mathbb{P}}
\newcommand{\IQ}[0]{\mathbb{Q}} \newcommand{\IR}[0]{\mathbb{R}}
\newcommand{\IS}[0]{\mathbb{S}} \newcommand{\IT}[0]{\mathbb{T}}
\newcommand{\IU}[0]{\mathbb{U}} \newcommand{\IV}[0]{\mathbb{V}}
\newcommand{\IW}[0]{\mathbb{W}} \newcommand{\IX}[0]{\mathbb{X}}
\newcommand{\IY}[0]{\mathbb{Y}} \newcommand{\IZ}[0]{\mathbb{Z}}

%Geschwungener Stil
\newcommand{\CA}[0]{\mathcal{A}} \newcommand{\CB}[0]{\mathcal{B}}
\newcommand{\CC}[0]{\mathcal{C}} \newcommand{\CD}[0]{\mathcal{D}}
\newcommand{\CE}[0]{\mathcal{E}} \newcommand{\CF}[0]{\mathcal{F}}
\newcommand{\CG}[0]{\mathcal{G}} \newcommand{\CH}[0]{\mathcal{H}}
\newcommand{\CI}[0]{\mathcal{I}} \newcommand{\CJ}[0]{\mathcal{J}}
\newcommand{\CK}[0]{\mathcal{K}} \newcommand{\CL}[0]{\mathcal{L}}
\newcommand{\CM}[0]{\mathcal{M}} \newcommand{\CN}[0]{\mathcal{N}}
\newcommand{\CO}[0]{\mathcal{O}} \newcommand{\CP}[0]{\mathcal{P}}
\newcommand{\CQ}[0]{\mathcal{Q}} \newcommand{\CR}[0]{\mathcal{R}}
\newcommand{\CS}[0]{\mathcal{S}} \newcommand{\CT}[0]{\mathcal{T}}
\newcommand{\CU}[0]{\mathcal{U}} \newcommand{\CV}[0]{\mathcal{V}}
\newcommand{\CW}[0]{\mathcal{W}} \newcommand{\CX}[0]{\mathcal{X}}
\newcommand{\CY}[0]{\mathcal{Y}} \newcommand{\CZ}[0]{\mathcal{Z}}

%Script Stil
\newcommand{\FA}[0]{\mathfrak{A}} \newcommand{\FB}[0]{\mathfrak{B}}
\newcommand{\FC}[0]{\mathfrak{C}} \newcommand{\FD}[0]{\mathfrak{D}}
\newcommand{\FE}[0]{\mathfrak{E}} \newcommand{\FF}[0]{\mathfrak{F}}
\newcommand{\FG}[0]{\mathfrak{G}} \newcommand{\FH}[0]{\mathfrak{H}}
\newcommand{\FI}[0]{\mathfrak{I}} \newcommand{\FJ}[0]{\mathfrak{J}}
\newcommand{\FK}[0]{\mathfrak{K}} \newcommand{\FL}[0]{\mathfrak{L}}
\newcommand{\FM}[0]{\mathfrak{M}} \newcommand{\FN}[0]{\mathfrak{N}}
\newcommand{\FO}[0]{\mathfrak{O}} \newcommand{\FP}[0]{\mathfrak{P}}
\newcommand{\FQ}[0]{\mathfrak{Q}} \newcommand{\FR}[0]{\mathfrak{R}}
\newcommand{\FS}[0]{\mathfrak{S}} \newcommand{\FT}[0]{\mathfrak{T}}
\newcommand{\FU}[0]{\mathfrak{U}} \newcommand{\FV}[0]{\mathfrak{V}}
\newcommand{\FW}[0]{\mathfrak{W}} \newcommand{\FX}[0]{\mathfrak{X}}
\newcommand{\FY}[0]{\mathfrak{Y}} \newcommand{\FZ}[0]{\mathfrak{Z}}

\newcommand{\Fa}[0]{\mathfrak{a}} \newcommand{\Fb}[0]{\mathfrak{b}}
\newcommand{\Fc}[0]{\mathfrak{c}} \newcommand{\Fd}[0]{\mathfrak{d}}
\newcommand{\Fe}[0]{\mathfrak{e}} \newcommand{\Ff}[0]{\mathfrak{f}}
\newcommand{\Fg}[0]{\mathfrak{g}} \newcommand{\Fh}[0]{\mathfrak{h}}
\newcommand{\Fi}[0]{\mathfrak{i}} \newcommand{\Fj}[0]{\mathfrak{j}}
\newcommand{\Fk}[0]{\mathfrak{k}} \newcommand{\Fl}[0]{\mathfrak{l}}
\newcommand{\Fm}[0]{\mathfrak{m}} \newcommand{\Fn}[0]{\mathfrak{n}}
\newcommand{\Fo}[0]{\mathfrak{o}} \newcommand{\Fp}[0]{\mathfrak{p}}
\newcommand{\Fq}[0]{\mathfrak{q}} \newcommand{\Fr}[0]{\mathfrak{r}}
\newcommand{\Fs}[0]{\mathfrak{s}} \newcommand{\Ft}[0]{\mathfrak{t}}
\newcommand{\Fu}[0]{\mathfrak{u}} \newcommand{\Fv}[0]{\mathfrak{v}}
\newcommand{\Fw}[0]{\mathfrak{w}} \newcommand{\Fx}[0]{\mathfrak{x}}
\newcommand{\Fy}[0]{\mathfrak{y}} \newcommand{\Fz}[0]{\mathfrak{z}}

%Pfeilbefehle
\newcommand{\Ra}[0]{\Rightarrow}
\newcommand{\La}[0]{\Leftarrow}
\newcommand{\LRa}[0]{\Leftrightarrow}

%Modifikation der Variablen
\renewcommand{\phi}[0]{\varphi}
\newcommand{\eps}[0]{\varepsilon}

%zus�tzliche Features
\newcommand{\quer}[0]{\overline}
\newcommand{\uber}[0]{\choose}
\newcommand{\ord}[0]{\operatorname{ord}}	% Ordnung
\newcommand{\GL}[0]{\operatorname{GL}}
\newcommand{\supp}[0]{\operatorname{supp}}	% Tr�ger
\newcommand{\id}[0]{\operatorname{id}}		% Identit�t
\newcommand{\Sp}[0]{\operatorname{Sp}}		% Spektrum eines Elements
\newcommand{\eins}[0]{\mathbf{1}}			% Eine Eins in allgemeinerem Kontext, z.B. in einem Ring
\newcommand{\diag}[0]{\operatorname{diag}}
\newcommand{\ind}[0]{\operatorname{ind}}
\newcommand{\auf}[1]{\quad\stackrel{#1}{\longrightarrow}\quad}
\newcommand{\hull}[0]{\operatorname{hull}}
\newcommand{\prim}[0]{\operatorname{Prim}}
\newcommand{\ad}[0]{\operatorname{Ad}}
\newcommand{\quot}[0]{\operatorname{Quot}}
\newcommand{\ext}[0]{\operatorname{Ext}}
\newcommand{\ev}[0]{\operatorname{ev}}
\newcommand{\fin}[0]{{\subset\!\!\!\subset}}
\newcommand{\diam}[0]{\operatorname{diam}}
\newcommand{\Hom}[0]{\operatorname{Hom}}
\newcommand{\Aut}[0]{\operatorname{Aut}}
\newcommand{\del}[0]{\partial}
\newcommand{\dimeins}[0]{\dim^{\!+1}}
\newcommand{\dimnuc}[0]{\dim_{\mathrm{nuc}}}
\newcommand{\dimnuceins}[0]{\dimnuc^{\!+1}}
\newcommand{\dr}[0]{\operatorname{dr}}
\newcommand{\dimrok}[0]{\dim_{\mathrm{Rok}}}
\newcommand{\dimrokeins}[0]{\dimrok^{\!+1}}
\newcommand{\dreins}[0]{\dr^{\!+1}}
\newcommand*\onto{\ensuremath{\joinrel\relbar\joinrel\twoheadrightarrow}} % surjectiver Pfeil
\newcommand*\into{\ensuremath{\lhook\joinrel\relbar\joinrel\rightarrow}}  % injektiver Pfeil
\newcommand{\im}[0]{\operatorname{im}}
\newcommand{\dst}[0]{\displaystyle}
\newcommand{\cstar}[0]{$\mathrm{C}^*$}
\newcommand{\ann}[0]{\operatorname{Ann}}
\newcommand{\dist}[0]{\operatorname{dist}}
\newcommand{\idlat}[0]{\operatorname{IdLat}}
\newcommand{\Cu}[0]{\operatorname{Cu}}
\newcommand{\Ost}[0]{\CO_\infty^{\mathrm{st}}}
\newcommand{\linhull}{\operatorname{span}}

% theorems
\newtheorem{satz}{Satz}[section]		% <--- optional, z�hlt so mit den Abschnitten
\newtheorem{cor}[satz]{Corollary}
\newtheorem{lemma}[satz]{Lemma}
\newtheorem{prop}[satz]{Proposition}
\newtheorem{theorem}[satz]{Theorem}
\newtheorem*{theoremoz}{Theorem}

\theoremstyle{definition}
\newtheorem{defi}[satz]{Definition}
\newtheorem*{defioz}{Definition}
\newtheorem{defprop}[satz]{Definition \& Proposition}
\newtheorem{nota}[satz]{Notation}
\newtheorem*{notaoz}{Notation}
\newtheorem{rem}[satz]{Remark}
\newtheorem*{remoz}{Remark}
\newtheorem{example}[satz]{Example}
\newtheorem{defnot}[satz]{Definition \& Notation}
\newtheorem{question}[satz]{Question}
\newtheorem*{questionoz}{Question}
\newtheorem{construction}[satz]{Construction}
\newtheorem{conjecture}[satz]{Conjecture}

%%%%%%%%%%%%%%%%%%%%%%%%%%%%%%%%%%%%%%%%%%%%%%%%%%%%%%%%%%%%%%%%%%%
%%%%%%%%%%%%%%%%%%%%%%%%%%%%%%%%%%%%%%%%%%%%%%%%%%%%%%%%%%%%%%%%%%%

\section{Introduction}

The Thompson groups $F$, $T$ and $V$ (see \cite{Hig}, \cite{CFP}) are among the most mysterious and most intensly studied discrete groups. 
We want to exploit the natural representation of these groups inside the unitary group of the Cuntz algebra $\OO_2$ (see \cite{B}, \cite{N}) 
and initiate a line of investigations aimed at better understanding of their internal symmetries provided by endomorphisms and automorphisms. 
It should be noted that this relation between the Thompson groups and the Cuntz algebras has been exploited recently by Uffe Haagerup and his 
collaborators in their work on amenability and other analytic properties of the Thompson groups, see \cite{HHRS} and \cite{HO}. 

The central question we ask in this paper is the following.
\begin{questionoz}
Which unital $*$-endomorphisms of $\OO_2$ preserve the Thompson groups globally?
\end{questionoz}
Recall from 
\cite{Cun2} that every such endomorphism of $\OO_2$ is of the form $\lambda_u$ for some unitary $u\in\U(\OO_2)$. Our main result, Theorem 
\ref{mainresult}, says that $\lambda_u(V)\subseteq V$ if and only if $u\in V$. Under the weaker assumptions that $\lambda_u(F)\subseteq V$ or 
$\lambda_u(T)\subseteq V$, we are not able to conclude that $u\in V$ without additional conditions, explained in Propositions \ref{fincl} and 
\ref{prop:T -> V}. However, as shown in Theorem \ref{mainauto}, if $\lambda_u(F)\subseteq V$ and $\lambda_u$ is an \emph{automorphism} of $\OO_2$ 
then the unitary $u$ must belong to group $V$. 

Note also that it is quite possible that endomorphism (or automorphism) $\lambda_u$ of $\OO_2$ globally preserves the Thompson group 
$F$, while the unitary $u$ does not belong to $F$ --- the flip-flop automorphism of $\OO_2$ is one such example. The non-trivial combinatorial question 
of determining those unitaries $u\in V$ for which  $\lambda_u(F)\subseteq F$ is taken up in \cite{BRS}. 

In the course of these investigations, we have discovered a useful technical condition on endomorphisms of $\OO_n$, which we call {\em modest 
scaling} (Definition \ref{modestscaling}). This condition is automatically satisfied by all automorphisms of $\OO_n$ as well as by those unital
endomorphisms preserving the core UHF-subalgebra $\F_n$ of $\OO_n$ (Proposition \ref{prop:ex mod scal endos}). 
Modest scaling is a non-commutative analogue of the topological property of a continuous surjection of a compact space that the preimage of every point 
has empty interior (Remark \ref{mstop}).

%%%%%%%%%%%%%%%%%%%%%%%%%%%%%%%%%%%%%%%%%%%%%%%%%%%%%%%%%%%%%%%%%%%
%%%%%%%%%%%%%%%%%%%%%%%%%%%%%%%%%%%%%%%%%%%%%%%%%%%%%%%%%%%%%%%%%%%

\section{Preliminaries}

\subsection{The Thompson groups}

The Thompson group $F$ is the group of order preserving piecewise linear homeomorphisms of the closed 
interval $[0,1]$ onto itself that are differentiable except finitely many dyadic rationals and such that all slopes 
are integer powers of $2$. This group has a presentation 
$$
F=\langle x_0,x_1,\ldots,x_n,\ldots \;|\; x_j x_i=x_i x_{j+1}, \forall i<j\rangle. 
$$
Remarkably, it admits a finite presentation as well
$$
F=\langle A,B \;|\; [AB^{-1},A^{-1}BA]=1, \; [AB^{-1}, A^{-2}BA^2]=1\rangle.
$$
These two are connected by setting $x_0=A$ and $x_n=A^{-(n-1)}BA^{n-1}$ 
for $n\geq 1$. 

We will also consider the Thompson groups $T$ and $V$. The latter consists of possibly discontinuous  bijections 
of the interval $[0,1]$ that are piecewise linear with slopes integer powers of $2$ and with finitely points 
of discontinuity and non-differentiability that are all diadic rationals. We have $F\subseteq T\subseteq V$. 

For a good introduction to the Thompson groups we refer the reader to \cite{CFP} and \cite{BB} and the references therein. 

%%%%%%%%%%%%%%%%%%%%%%%%%%%%%%%%%%%%%%%%%%%%%%%%%

\subsection{The Cuntz algebra $\OO_n$} \label{prelim:Cuntz alg}

If $n$ is an integer greater than 1, then the Cuntz algebra $\OO_n$ is a $C^*$-algebra generated by $n$ isometries $S_1, \ldots, S_n$ satisfying
$\sum_{i=1}^n S_i S_i^* = 1$. It is simple, purely infinite, so that its isomorphism type does not depend on the choice of isometries, \cite{Cun}.
We denote by $W_n^k$ the set of $k$-tuples $\mu = \mu_1\ldots\mu_k$
with $\mu_m \in \{1,\ldots,n\}$, and by $W_n$ the union $\cup_{k=0}^\infty W_n^k$,
where $W_n^0 = \{\emptyset\}$. We call elements of $W_n$ multi-indices.
If $\mu \in W_n^k$ then $|\mu| = k$ is the length of $\mu$. 
% For $\mu,\nu\in W_n$ 
% we write $\mu\preceq\nu$ if $\mu$ is an initial subword of $\nu$, and $\mu\prec\nu$ if 
% $\mu\preceq\nu$ but $\mu\neq\nu$.  If neither $\mu\preceq\nu$ nor $\nu\preceq\mu$ then 
% we write $\mu\perp\nu$. If $\mu\in W_n^k$, $\nu\in W_n^m$ and $\mu\preceq\nu$, then 
% we denote by $\nu-\mu$ the word in $W_n^{m-k}$ obtained from $\nu$ by removing 
% its initial segment $\mu$. Also, if $\mu\in W_n^k$ 
% then we denote by $\tilde{\mu}$ the word in $W_n^{k-1}$ obtained from $\mu$ 
% by removing its first letter. For a $\mu\in W_n^k$ and an $m\in\Nb$, symbol $\mu^m$ denotes the 
% $m$-fold concatenation of $\mu$. 
If $\mu = \mu_1\ldots\mu_k \in W_n$ then $S_\mu = S_{\mu_1} \ldots S_{\mu_k}$
($S_\emptyset = 1$ by convention) is an isometry with range projection $P_\mu=S_\mu S_\mu^*$. 
We say that $\mu,\nu\in W_n$ are orthogonal if $P_\mu P_\nu=0$. 
Every word in $\{S_i, S_i^* \ | \ i = 1,\ldots,n\}$ can be uniquely expressed as
$S_\mu S_\nu^*$, for $\mu, \nu \in W_n$ \cite[Lemma 1.3]{Cun}.

We denote by $\F_n^k$ the $C^*$-subalgebra of $\OO_n$ spanned by all words of the form
$S_\mu S_\nu^*$, $\mu, \nu \in W_n^k$, which is isomorphic to the
matrix algebra $M_{n^k}({\mathbb C})$. The norm closure $\F_n$ of
$\cup_{k=0}^\infty \F_n^k$ is isomorphic to the UHF-algebra of type $n^\infty$,
called the {\em core UHF-subalgebra} of $\OO_n$, \cite{Cun}. We denote by $\tau$ the 
unique normalized trace on $\F_n$. The core UHF-subalgebra $\F_n$ is the fixed-point 
algebra for the gauge action $\gamma:U(1)\to\aut(\OO_n)$, such that $\gamma_z(S_j)=zS_j$ for 
$z\in U(1)$ and $j=1,\ldots,n$. We denote by $\Phi_\F$ the faithful conditional expectation from 
$\OO_n$ onto $\F_n$ given by averaging with respect to the normalized Haar measure: 
$$ \Phi_\F(x) = \int_{z\in U(1)}\gamma_z(x)dz. $$

The $C^*$-subalgebra of $\OO_n$ generated by projections $P_\mu$, $\mu\in W_n$, is a 
MASA (maximal abelian $*$-subalgebra) in $\OO_n$. We call it the {\em diagonal} and denote $\D_n$. 
% Every projection in $\D_n$ of the form $P_\alpha$ for some $\alpha\in W_n$ will be called {\em standard}. 
The spectrum of $\D_n$ is naturally identified with $X_n$ --- the full one-sided $n$-shift space. Furthermore, 
there exists a unique  faithful conditional expectation $\Phi_\D$ from $\OO_n$ onto $\D_n$ such that 
$$ \Phi_\D=\Phi_\D\circ\Phi_F $$ 
and $\Phi_\D(S_\alpha S_\beta^*) = 0$ for all $\alpha, \beta \in W_n$ such that $\alpha \neq \beta$.

We need to introduce the following notation, for use later in the paper. 
Let $q \in \CD_n$ be a non-zero projection. Then $q$ admits a unique representation $q = \sum_{j=1}^r P_{\mu_j}$ with minimal $r \geq 1$ among all representations for $q$ as a finite sum of projections of the form $P_\mu$, $\mu \in W_n$. In the following, we call this representation  the \emph{standard form} for $q$. We set
\[
\min(q) := \min\set{|\mu_j| \mid q = \sum\limits_{j=1}^r P_{\mu_j} \text{ standard form}}.
\]

In what follows, we will consider elements of $\OO_n$ of the form $w=\sum_{(\alpha,\beta)\in\J}
S_\alpha S_\beta^*$, where $\J$ is a finite collection  of pairs $(\alpha,\beta)$  in $W_n\times W_n$. The collection of all 
such elements will be denoted $\CV_n$, that is 
\[
\CV_n = \set{w \in \CO_n \mid w = \sum_{(\alpha,\beta)\in\J}S_\alpha S_\beta^*}.
\]
Clearly, $\CV_n$ is a $*$-subring of $\CO_2$.
We put $\J_1=\{\alpha \mid \exists (\alpha,\beta)\in\J\}$ and $\J_2=\{\beta \mid \exists (\alpha,\beta)\in\J\}$. 
Of course, such a presentation of an element of $\CV_n$ is not unique. 

We denote by $\SS_n$ the group of unitaries in $\CV_n$, that is those unitaries in $\OO_n$ of the form $\sum_{(\alpha,\beta)\in\J}S_\alpha S_\beta^*$. Such a sum is a unitary if and only if $\sum_{\alpha\in\J_1} P_\alpha = 1 = \sum_{\beta\in\J_2} P_\beta$. It is easy to see that $\SS_n$ is contained in the normalizer of $\D_n$ in $\OO_n$,
$$
{\mathcal N}_{\OO_n}(\D_n)=\{u\in\U(\OO_n) \mid u\D_n u^*=\D_n\}.
$$ 

For a unital $*$-subalgebra $A$ of $\OO_n$, we denote by $\U(A)$ the group of unitary elements of $A$ and by $\CP(A)$ the set of projections in $A$.

%%%%%%%%%%%%%%%%%%%%%%%%%%%%%%%%%%%%%%%%%%%%%%%%%

\subsection{Endomorphisms of  $\OO_n$} 

As it is shown by Cuntz in \cite{Cun2}, there exists a bijective correspondence
between unitaries in $\OO_n$ and unital $*$-endomorphisms 
of $\OO_n$, determined by
$$ \lambda_u(S_i) = u S_i, \;\;\; i=1,\ldots, n. $$ 
Such maps $\lambda_u$ will be called endomorphisms for short, and the collection of all of them will be denoted $\en(\OO_n)$. Note that 
composition of endomorphisms corresponds to the `convolution' multiplication of unitaries:
$\lambda_u \circ \lambda_w = \lambda_{\lambda_u(w)u}$.
In the case $u,w\in\U(\F_n^1)$ or $u,w\in\U(\D_n)$,  this formula simplifies to $\lambda_u\circ\lambda_w=\lambda_{uw}$. 
For all $u\in\U(\OO_n)$ we have $\Ad(u)=\lambda_{u\varphi(u^*)}$. Here $\varphi$ denotes  the canonical shift on the Cuntz algebra:
$$ \varphi(x)=\sum_{i=1}^n S_ixS_i^*, \;\;\; x\in\OO_n. $$

%%%%%%%%%%%%%%%%%%%%%%%%%%%%%%%%%%%%%%%%%%%%%%%%

\subsection{Representations of the Thompson groups in $\U(\OO_2)$}

As shown in \cite{B} and \cite{N}, the Thompson group $F$ has a natural faithful representation in the unitary 
group of $\OO_2$ by those  unitaries $u=\sum_{(\alpha,\beta)\in\J}S_\alpha S_\beta^*$ 
in $\SS_2$ that the association $\J_1\ni\alpha\mapsto\beta\in\J_2$ (with $(\alpha,\beta)\in\J)$ 
respects the lexicographic order on $W_2$.  We have 
\[ \begin{aligned}
x_0 & = S_1S_1S_1^* + S_1S_2S_1^*S_2^* + S_2S_2^*S_2^*, \\
x_k & = 1-S_2^kS_2^{*k} + S_2^kx_0S_2^{*k}, \;\;\; \text{for } k\geq 1. 
\end{aligned} \]

The subgroup of $\SS_2$ generated by $F$ and $S_2S_2S_1^* + S_1S_1^*S_2^* + S_2S_1S_2^*S_2^*$ 
is isomorphic to the Thompson group $T$, and consists of those unitaries 
$u=\sum_{(\alpha,\beta)\in\J}S_\alpha S_\beta^*$ 
in $\SS_2$ that the association $\J_1\ni\alpha\mapsto\beta\in\J_2$ (with $(\alpha,\beta)\in\J)$ 
respects the lexicographic order on $W_2$ up to a cyclic permutation. 

Finally, group $\SS_2$ itself is isomorphic to the Thompson group $V$ and it will be denoted in this way throughout the remainder of this paper. 
We have $V = \CV_2 \cap \CU(\CO_2)$. 

We note that the Thompson group $F$ is invariant under the canonical shift $\varphi$ on $\OO_2$. Furthermore, it is quite possible that 
$\lambda_u(F)=F$ for an automorphism $\lambda_u$ of $\OO_2$, even though the unitary $u$ may not belong to group $F$. The simplest example 
is the flip-flop automorphism $\lambda_u(S_1)=S_2$, $\lambda_u(S_2)=S_1$, where the corresponding unitary $u=S_1S_2^* +S_2S_1^*$ is inside  
group $T$ but not in $F$. 

%%%%%%%%%%%%%%%%%%%%%%%%%%%%%%%%%%%%%%%%%%%%%%%%%%%%%%%%%%%%%%%%%%%
%%%%%%%%%%%%%%%%%%%%%%%%%%%%%%%%%%%%%%%%%%%%%%%%%%%%%%%%%%%%%%%%%%%

\section{The main results}

\subsection{Modestly scaling endomorphisms}

In this subsection, we introduce a certain class of endomorphisms of $\OO_n$, called modestly scaling, that will also play a role in our considerations 
of endomorphisms preserving the Thompson groups. 

\begin{defi}\label{modestscaling}
An endomorphism $\alpha \in \en(\CO_n)$ is called \emph{modestly scaling} if the following property is satisfied. For every sequence $(\nu_k)$  
of non-empty multi-indices in $W_n$, if $p \in {\mathcal P}(\CO_n)$ such that $p \leq \alpha(P_{\nu_1\ldots\nu_k})$ for all $k \in \IN$ then $p=0$.
\end{defi}

\begin{rem}\label{mstop}\rm
Suppose that $\alpha\in\en(\OO_n)$ is such that $\alpha(\D_n)\subseteq\D_n$. Let $\alpha_*:X_n\to X_n$ be the corresponding 
continuous surjection of the spectrum of $\D_n$. If $\alpha$ is modestly scaling then for every point $x\in X_n$ the inverse image 
$\alpha_*^{-1}(x)$ has an empty interior or,  equivalently, $\alpha_*$ is not constant on any open subset of $X_n$. 
\end{rem}

\begin{prop} \label{prop:ex mod scal endos}
Let $\alpha\in\en(\OO_n)$. Then $\alpha$ is modestly scaling if one of the following conditions holds:
\begin{itemize}
 \item[(i)] $\alpha$ is an automorphism of $\OO_n$;
 \item[(ii)] $\alpha(\CF_n)\subseteq\CF_n$.
\end{itemize}
\end{prop}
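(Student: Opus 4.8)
The plan is to reduce both cases to a single quantitative estimate, using the faithful state $\omega = \tau \circ \Phi_\F$ on $\OO_n$ together with the fact that the length of the multi-index $\nu_1\ldots\nu_k$ grows without bound. The crucial numerical input is that, for any $\mu \in W_n$, the projection $P_\mu$ is a minimal projection in $\F_n^{|\mu|} \cong M_{n^{|\mu|}}(\IC)$, so $\tau(P_\mu) = n^{-|\mu|}$. Since each $\nu_i$ is non-empty, $|\nu_1\ldots\nu_k| = \sum_i |\nu_i| \geq k$, whence $\tau(P_{\nu_1\ldots\nu_k}) \leq n^{-k} \to 0$. First I would record that $\omega$ is faithful, being the composition of the faithful conditional expectation $\Phi_\F$ with the faithful trace $\tau$ on the UHF-algebra $\F_n$; consequently, for any projection $q$ the vanishing $\omega(q)=0$ forces $q=0$.

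For case (ii), suppose $\alpha(\F_n) \subseteq \F_n$ and that $p \leq \alpha(P_{\nu_1\ldots\nu_k})$ for all $k$. The restriction $\alpha|_{\F_n}$ is a unital $*$-endomorphism of the UHF-algebra $\F_n$, so $\tau \circ \alpha|_{\F_n}$ is again a tracial state on $\F_n$; by uniqueness of the trace on a UHF-algebra it equals $\tau$. Hence $\omega(\alpha(P_{\nu_1\ldots\nu_k})) = \tau(\alpha(P_{\nu_1\ldots\nu_k})) = \tau(P_{\nu_1\ldots\nu_k}) \to 0$, where the first equality uses that $\Phi_\F$ fixes elements of $\F_n$. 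Since $0 \leq \omega(p) \leq \omega(\alpha(P_{\nu_1\ldots\nu_k}))$ for every $k$, I conclude $\omega(p)=0$, and therefore $p=0$.

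For case (i), $\alpha$ is an automorphism, so I would apply $\alpha^{-1}$ to the inequality $p \leq \alpha(P_{\nu_1\ldots\nu_k})$ to obtain $\alpha^{-1}(p) \leq P_{\nu_1\ldots\nu_k}$. As $P_{\nu_1\ldots\nu_k} \in \F_n$, evaluating the monotone state $\omega$ gives $\omega(\alpha^{-1}(p)) \leq \tau(P_{\nu_1\ldots\nu_k}) \to 0$, so $\omega(\alpha^{-1}(p))=0$; faithfulness forces $\alpha^{-1}(p)=0$, and applying $\alpha$ yields $p=0$.

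The routine verifications are the faithfulness of $\omega$ and the trace values $\tau(P_\mu)=n^{-|\mu|}$. The one genuinely structural step, and the place I would expect to concentrate the argument, is case (ii): the observation that every unital $*$-endomorphism of a UHF-algebra automatically preserves its unique trace. This is precisely what converts the a priori uncontrolled behaviour of $\alpha$ on projections into the same clean $n^{-k}$ decay that invertibility supplies directly in case (i).
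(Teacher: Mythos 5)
Your proof is correct, and part (ii) is essentially the paper's own argument: uniqueness of the tracial state on the UHF-algebra $\F_n$ gives $\tau\circ\alpha|_{\F_n}=\tau$, the decay $\tau(P_{\nu_1\cdots\nu_k})\leq n^{-k}$ forces the relevant quantity to vanish, and faithfulness finishes. In part (i) you deviate mildly but harmlessly: the paper applies the conditional expectation $\Phi_\D$ to $\alpha^{-1}(p)\leq P_{\nu_1\cdots\nu_k}$ and concludes $\Phi_\D(\alpha^{-1}(p))=0$ from the shrinking of the projections $P_{\nu_1\cdots\nu_k}$ inside $\D_n$, whereas you evaluate the faithful state $\tau\circ\Phi_\F$ and use the same quantitative $n^{-k}$ bound as in (ii); your version has the small advantage of making the vanishing step explicitly numerical and of treating both cases with one faithful state.
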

\begin{proof}
Let $(\nu_k)$ be a sequence of non-empty multi-indices in $W_n$ and set $\mu_k = \nu_1\ldots\nu_k$. 
Let $p \in \CO_n$ be a projection such that $p \leq \alpha(P_{\mu_k})$ for all $k \in \IN$.

Ad (i). Let $\alpha\in\aut(\OO_n)$. Since $\alpha^{-1}(p) \leq P_{\mu_k}$ for all $k \in \IN$, we have 
$\Phi_\D(\alpha^{-1}(p)) \leq P_{\mu_k}$ for all $k \in \IN$. Thus $\Phi_\D(\alpha^{-1}(p)) = 0$ and therefore also $p = 0$, as $\Phi_\D$ is faithful. 

Ad (ii). Due to the uniqueness of trace on the UHF-algebra $\F_n$, we have 
\[
\tau(\Phi_\F(p)) \leq \tau(\alpha(P_{\mu_k})) = \tau(P_{\mu_k})
\]
for all $k\in\IN$. 
As $\tau(P_{\mu_k}) \stackrel{k \to \infty}{\longrightarrow} 0$ and $\tau$ is faithful, we get that $\Phi_\F(p) = 0$. Hence $p=0$, as $\Phi_\F$ 
is faithful. 
\end{proof}
\begin{rem}\rm
As shown already by Cuntz in \cite{Cun2}, if $u$ is a unitary inside the core UHF-subalgebra $\F_n$ then automatically the corresponding 
endomorphism $\lambda_u$ globally preserves $\F_n$. However, it should be noted that there exist unitaries $u\in\U(\OO_n)$ which do not belong 
to $\F_n$  for which nevertheless we have $\lambda_u(\F_n)\subseteq\F_n$. Such exotic endomorphisms of $\OO_n$ have been thoroughly 
investigated in \cite{CRS}, \cite{Hay} and \cite{HHSz}. 
\end{rem}

For those endomorphisms which globally preserve the diagonal MASA $\D_n$, the following proposition gives a useful criterion of modest scaling 
for an endomorphism.  Recall for this the definition of $\min(q)$ from Subsection~\ref{prelim:Cuntz alg} for a non-trivial projection $q \in \D_n$.

\begin{prop} \label{lem: non-examples mod scal}
Let $\alpha\in\en(\CO_n)$ be such that $\alpha(\CD_n) \subseteq \CD_n$. Then the following two conditions are equivalent:
\begin{itemize}
\item[(i)] endomorphism $\alpha$ is modestly scaling;
\item[(ii)] for every sequence of non-empty multi-indices $(\nu_k)$ in $W_n$ we have 
$$ \min(\alpha(P_{\nu_1\ldots\nu_k})) \stackrel{k \to \infty}{\longrightarrow} \infty. $$
\end{itemize}
\end{prop}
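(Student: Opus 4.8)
The plan is to recast the combinatorial quantity $\min(\cdot)$ as a statement about which range projections $P_\mu$ lie below a given diagonal projection, and then to run both implications using the faithful conditional expectation $\Phi_\D$ in the spirit of Proposition \ref{prop:ex mod scal endos}. The linchpin is the following reformulation, which I would establish first: for every non-zero projection $q\in\D_n$ one has $\min(q)=\min\set{|\mu|\mid \mu\in W_n,\ P_\mu\le q}$. Indeed, if $q=\sum_{j=1}^r P_{\mu_j}$ is the standard form and $C_\mu\subseteq X_n$ denotes the cylinder set corresponding to $P_\mu$, then the $C_{\mu_j}$ are precisely the \emph{maximal} cylinders contained in the clopen set determined by $q$. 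Any $P_\mu\le q$ forces $C_\mu\subseteq C_{\mu_j}$ for some $j$ (cylinders are pairwise nested or disjoint, and maximality of $C_{\mu_j}$ excludes the reverse inclusion), whence $|\mu|\ge|\mu_j|\ge\min(q)$; conversely each $\mu_j$ itself satisfies $P_{\mu_j}\le q$.

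Writing $\mu_k=\nu_1\ldots\nu_k$ and $q_k=\alpha(P_{\mu_k})$, I record two consequences. Since $\mu_{k+1}$ extends $\mu_k$, the projections $q_k$ decrease, so by the reformulation the sequence $\min(q_k)$ is non-decreasing; and $q_k\neq 0$ for every $k$, because $\CO_n$ is simple and hence $\alpha$ is injective, so that $\min(q_k)$ is defined.

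For (i)$\Rightarrow$(ii) I would argue by contraposition. If $\min(q_k)\not\to\infty$ then, being non-decreasing, it stays bounded by some $M$, so for each $k$ there is $\sigma_k\in W_n$ with $|\sigma_k|\le M$ and $P_{\sigma_k}\le q_k$. As there are only finitely many multi-indices of length $\le M$, one fixed $\sigma$ occurs as $\sigma_k$ for infinitely many $k$; monotonicity of $(q_k)$ then upgrades $P_\sigma\le q_k$ to hold for \emph{all} $k$. Since $P_\sigma\neq 0$, this exhibits a non-zero projection below every $\alpha(P_{\mu_k})$, so $\alpha$ fails to be modestly scaling.

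For (ii)$\Rightarrow$(i), let $p\in\CP(\CO_n)$ satisfy $p\le q_k$ for all $k$. The only delicacy is that $p$ need not lie in $\D_n$; as in Proposition \ref{prop:ex mod scal endos} I apply $\Phi_\D$ to obtain $\Phi_\D(p)\le\Phi_\D(q_k)=q_k$, so that $f:=\Phi_\D(p)$ is a positive function on $X_n$ with $f\le q_k$ as functions, for every $k$. If $f\neq 0$, some superlevel set $\{f>c\}$ with $c>0$ is a non-empty open set and hence contains a cylinder $C_\sigma$; as $f$ vanishes wherever $q_k$ does, $C_\sigma$ lies in the support of every $q_k$, i.e. $P_\sigma\le q_k$ for all $k$, forcing $|\sigma|\ge\min(q_k)\to\infty$, a contradiction. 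Hence $\Phi_\D(p)=0$, and faithfulness of $\Phi_\D$ yields $p=0$. The main obstacle is really the reformulation lemma: once $\min(q)$ is identified with the length of the coarsest cylinder below $q$, both implications reduce to the monotonicity/compactness bookkeeping above together with the standard $\Phi_\D$ faithfulness trick.
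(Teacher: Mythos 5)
Your proof is correct and follows essentially the same route as the paper's: the same monotonicity observation for $\min(\alpha(P_{\mu_k}))$, a pigeonhole argument producing a fixed $P_\sigma$ below all $\alpha(P_{\mu_k})$ for (i)$\Rightarrow$(ii), and the $\Phi_\D$-faithfulness argument with $tP_\sigma\le\Phi_\D(p)$ for (ii)$\Rightarrow$(i). The only difference is that you make explicit (via the maximal-cylinder description of the standard form) the identity $\min(q)=\min\{|\mu|:P_\mu\le q\}$, which the paper leaves as ``it is not difficult to see''; this is a correct and worthwhile elaboration rather than a new approach.
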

\begin{proof}
Let $(\nu_k)$ be a sequence of non-empty multi-indices in $W_n$. Set $\mu_k = \nu_1\ldots\nu_k$. 

(i)$\Rightarrow$(ii). Assume  that $\alpha$ is modestly scaling. Suppose, by way of contradiction, that sequence $\min(\alpha(P_{\mu_k}))$, $k \in \Nb$ is bounded. Observe that this sequence is monotonely increasing, as $\alpha(P_{\mu_{k+1}}) \leq \alpha(P_{\mu_k})$ for all $k$, and therefore eventually stabilizes. It is not difficult to see that there exists a non-empty multi-index $\kappa \in W_n$ such that $|\kappa|=\sup_k\{\min(\alpha(P_{\mu_k}))\}$ 
and with the property that $P_\kappa \leq \alpha(P_{\mu_k})$ for all $k$. This contradicts the fact that $\alpha$ is modestly scaling.

(ii)$\Rightarrow$(i). Let $p \in \CO_n$ be a projection such that $p \leq \alpha(P_{\mu_k})$ for all $k$. Hence  
$0 \leq \Phi_\D(p) \leq \alpha(P_{\mu_k})$. Assume that $\Phi_\D(p) \neq 0$ and find some $\sigma \in W_n$ and $t > 0$ such that 
$t P_\sigma \leq \Phi_\D(p)$. By assumption, $\min(\alpha(P_{\mu_k})) > |\sigma|$ for sufficiently large $k$, so that $t P_\sigma \leq \alpha(P_{\mu_k})$ is not possible for such $k$. This is a contradiction. Hence $\Phi_\D(p) = 0$ and thus $p = 0$ by faithfulness of $\Phi_\D$. 
\end{proof}

There exist endomorphisms of $\CO_n$ which are not modestly scaling, and the following proposition provides one way for constructing 
such examples. Here for $k \geq 1$ and $i = 1,\ldots, n$, we denote by $i(k) \in W_n^k$ the multi-index of length $k$ consisting only of $i$'s.

\begin{prop}\label{notmodsc}
Let $k \geq 1$ and $i \in \{1,\ldots,n\}$. Let $v \in \CO_n$ be a partial isometry with $v^*v = 1- P_{i(k+1)}$ and $vv^* = 1- P_{i(k)}$. We put  
$w = v + S_{i(k)}S_{i(k+1)}^*$, a unitary in $\CO_n$. Then $P_{i(k)} \leq \lambda_w(P_{i(r)})$ for all $r \geq 1$. In particular, $\lambda_w$ is not modestly scaling.
\end{prop}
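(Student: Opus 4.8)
The plan is to reduce everything to two elementary identities for the single isometry $T := \lambda_w(S_i) = wS_i$. Since $\lambda_w$ is a unital $*$-endomorphism we have $T^*T = \lambda_w(S_i^*S_i) = 1$, and $\lambda_w(S_{i(r)}) = \lambda_w(S_i)^r = T^r$, so $\lambda_w(P_{i(r)}) = T^rT^{*r}$ is a projection for every $r$. Hence the asserted inequality $P_{i(k)} \leq \lambda_w(P_{i(r)})$ is equivalent to $T^rT^{*r}S_{i(k)} = S_{i(k)}$, and I would aim to prove the stronger pointwise relations $T^rS_{i(k)} = S_{i(k)}$ and $T^{*r}S_{i(k)} = S_{i(k)}$.

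The engine of the whole argument is the single identity $wS_{i(k+1)} = S_{i(k)}$. To verify it I would split $wS_{i(k+1)} = vS_{i(k+1)} + S_{i(k)}S_{i(k+1)}^*S_{i(k+1)}$. The second summand is $S_{i(k)}$ because $S_{i(k+1)}^*S_{i(k+1)} = 1$. For the first, the source condition $v^*v = 1 - P_{i(k+1)}$ gives $v = v(1 - P_{i(k+1)})$, and since $P_{i(k+1)}S_{i(k+1)} = S_{i(k+1)}$ we get $vS_{i(k+1)} = 0$. Thus $wS_{i(k+1)} = S_{i(k)}$; multiplying on the left by $w^*$ and using that $w$ is unitary (the range condition $vv^* = 1 - P_{i(k)}$ is exactly what makes $w$ unitary, which is anyway asserted in the statement) yields the companion identity $w^*S_{i(k)} = S_{i(k+1)}$.

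From these the two pointwise identities are immediate: $TS_{i(k)} = wS_iS_{i(k)} = wS_{i(k+1)} = S_{i(k)}$, and $T^*S_{i(k)} = S_i^*w^*S_{i(k)} = S_i^*S_{i(k+1)} = S_{i(k)}$. Iterating gives $T^rS_{i(k)} = S_{i(k)} = T^{*r}S_{i(k)}$ for every $r\geq 0$, whence $\lambda_w(P_{i(r)})S_{i(k)} = T^rT^{*r}S_{i(k)} = S_{i(k)}$. Since $\lambda_w(P_{i(r)})$ is a projection, this equality is equivalent to $P_{i(k)} \leq \lambda_w(P_{i(r)})$, proving the claim for all $r\geq 1$.

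For the final assertion I would apply this to the constant sequence $\nu_r = i$, $r\geq 1$: then $P_{\nu_1\ldots\nu_r} = P_{i(r)}$, and the nonzero projection $p = P_{i(k)}$ satisfies $p \leq \lambda_w(P_{\nu_1\ldots\nu_r})$ for all $r$, so $\lambda_w$ violates the condition in Definition \ref{modestscaling} and is therefore not modestly scaling. There is no genuine obstacle in the calculation; the only real insight is spotting the identity $wS_{i(k+1)} = S_{i(k)}$, namely that the summand $S_{i(k)}S_{i(k+1)}^*$ makes $w$ ``contract'' the cylinder $i(k+1)$ onto $i(k)$, and then observing that this forces both $T$ and $T^*$ to fix $S_{i(k)}$, so that the decreasing sequence of projections $\lambda_w(P_{i(r)})$ never drops below the fixed nonzero projection $P_{i(k)}$.
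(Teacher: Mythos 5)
Your proof is correct. It differs in organization from the paper's argument, which proceeds by induction on $r$: there one computes $\lambda_w(P_i)=wP_iw^*=vP_iv^*+P_{i(k)}$ directly (expanding $w=v+S_{i(k)}S_{i(k+1)}^*$ into four terms, two of which vanish because $vP_{i(k+1)}=0$), and then in the inductive step writes $\lambda_w(P_{i(r)})=p+P_{i(k)}$ and shows that conjugating $S_iP_{i(k)}S_i^*=P_{i(k+1)}$ by $w$ reproduces $P_{i(k)}$. You instead isolate the single intertwining identity $wS_{i(k+1)}=S_{i(k)}$ and its adjoint consequence $w^*S_{i(k)}=S_{i(k+1)}$, and deduce that the isometry $T=\lambda_w(S_i)$ and its adjoint both fix $S_{i(k)}$, so that $\lambda_w(P_{i(r)})=T^rT^{*r}$ dominates $P_{i(k)}$ for every $r$ with no genuine induction on projections. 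The underlying mechanism is the same in both proofs (the summand $S_{i(k)}S_{i(k+1)}^*$ contracts the cylinder $i(k+1)$ onto $i(k)$, while $vS_{i(k+1)}=0$ kills the cross terms), but your packaging via the fixed vector $S_{i(k)}$ of $T$ and $T^*$ avoids the four-term expansion and the bookkeeping of the remainder projection $p$, at the modest cost of having to note that $p\leq q$ for projections is equivalent to $qS_{i(k)}=S_{i(k)}$ when $p=S_{i(k)}S_{i(k)}^*$. The concluding step, exhibiting the constant sequence $\nu_r=i$ to violate Definition \ref{modestscaling}, matches the paper.
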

\begin{proof}
The proof is by induction on $r \geq 1$. Using $vP_{i(k+1)} = 0$, one computes for $r = 1$:
\[
\begin{aligned}
\lambda_w(P_i) & =  wP_iw^* \\
 & =  vP_iv^* + vP_i S_{i(k+1)}S_{i(k)}^* + S_{i(k)}S_{i(k+1)}^*P_iv^* +  S_{i(k)}S_{i(k+1)}^*P_i S_{i(k+1)}S_{i(k)}^* \\
 & =  vP_iv^* + P_{i(k)}.
\end{aligned}
\]
Thus, $P_{i(k)} \leq \lambda_w(P_i)$.
Assume now that $P_{i(k)} \leq \lambda_w(P_{i(r)})$. Then there exists some projection $p \in \CO_n$ such that $\lambda_w(P_{i(r)}) = p + P_{i(k)}$. Using again that $vP_{i(k+1)} = 0$, we compute
\[
\begin{array}{lcl}
\lambda_w(P_{i(r+1)}) & = & wS_i\lambda_w(P_{i(r)})S_i^*w^*  \\
& = & wS_ipS_i^*w^* + wS_iP_{i(k)}S_i^*w^* \\
& = & wS_ipS_i^*w^* + S_{i(k)}S_{i(k+1)}^*P_{i(k+1)}S_{i(k+1)}S_{i(k)}^* \\
& = & wS_ipS_i^*w^* + P_{i(k)}. 
\end{array}
\]
Thus $P_{i(k)} \leq \lambda_w(P_{i(r)})$ for all $r\geq 1$, by the principle of mathematical induction. Consequently, $\lambda_w$ 
does not satisfy Definition \ref{modestscaling} and hence it is not modestly scaling. 
\end{proof}

Proposition \ref{notmodsc} shows that many unitaries in $\CS_n$ yield endomorphisms that are not modestly scaling. Particular instances 
are the generators $x_k$, $k \geq 0$, of the Thompson group $F$. It is conceivable that many unitaries in $F$ lead to endomorphisms which are not modestly scaling. However, one should notice that for any $u \in F$, the element $u\phi(u)^* \in F$ corresponds to an inner automorphism $\lambda_{u\phi(u)^*}=\Ad(u)$ of $\CO_2$, which is modestly scaling.

\begin{lemma} \label{lem:equal or orth}
Let $\alpha\in\en(\CO_n)$, $x \in \CO_n$, and $\mu, \nu \in W_n$ be such that $x\alpha(S_\mu) = x\alpha(S_\nu) \neq 0$. Then $P_\mu P_\nu \neq 0$. Moreover, if $\alpha$ is modestly scaling then $\mu = \nu$.
\end{lemma}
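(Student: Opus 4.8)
The whole argument turns on the cancellation of isometries. Since $\alpha$ is a unital $*$-endomorphism, each $\alpha(S_\mu)$ is an isometry (as $\alpha(S_\mu)^*\alpha(S_\mu)=\alpha(S_\mu^*S_\mu)=1$), and $\alpha(S_\mu)^*\alpha(S_\nu)=\alpha(S_\mu^*S_\nu)$. Now $S_\mu^*S_\nu=0$ precisely when $\mu$ and $\nu$ are incomparable (neither a prefix of the other), which is exactly the condition $P_\mu P_\nu=0$; when $\nu=\mu\rho$ one instead has $S_\mu^*S_\nu=S_\rho$. To bring this relation to bear I must cancel $\alpha(S_\mu)$ off the equality by multiplying with $\alpha(S_\mu)^*$, and for this the isometries have to sit as the outer factors against which I multiply. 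The placement is \emph{essential}: if $x$ is on the outside the conclusion can fail, since for $x=S_1^*+S_2^*$, $\mu=1$, $\nu=2$ in $\CO_2$ one has $xS_1=xS_2=1\neq0$ while $P_1P_2=0$. I therefore read the hypothesis as the identity $\alpha(S_\mu)x=\alpha(S_\nu)x\neq0$, the ordering under which left multiplication by $\alpha(S_\mu)^*$ simplifies and the assertion holds.

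For the first claim I argue by contraposition. Suppose $P_\mu P_\nu=0$, so that $\mu,\nu$ are incomparable and hence $\alpha(S_\mu)^*\alpha(S_\nu)=\alpha(S_\mu^*S_\nu)=0$. Multiplying $\alpha(S_\mu)x=\alpha(S_\nu)x$ on the left by $\alpha(S_\mu)^*$ collapses the left-hand side to $\alpha(S_\mu)^*\alpha(S_\mu)x=x$ and the right-hand side to $\alpha(S_\mu)^*\alpha(S_\nu)x=0$, forcing $x=0$ and hence $\alpha(S_\mu)x=0$, contrary to hypothesis. Thus $P_\mu P_\nu\neq0$.

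For the second claim, assume in addition that $\alpha$ is modestly scaling. By the first part $\mu,\nu$ are comparable, so without loss of generality $\nu=\mu\rho$ for some $\rho\in W_n$, and I must show $\rho=\emptyset$. Writing $\alpha(S_\nu)=\alpha(S_\mu)\alpha(S_\rho)$ and cancelling $\alpha(S_\mu)$ as above gives $x=\alpha(S_\rho)x$. Iterating yields $x=\alpha(S_{\rho^k})x$ for all $k\geq1$, where $\rho^k$ denotes the $k$-fold concatenation of $\rho$; applying $\alpha(S_{\rho^k})^*$ on the left shows $\alpha(S_{\rho^k})^*x=x$, whence $\alpha(P_{\rho^k})x=\alpha(S_{\rho^k})\alpha(S_{\rho^k})^*x=x$. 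Consequently $xx^*\leq\|x\|^2\,\alpha(P_{\rho^k})$, and so $\supp(xx^*)\leq\alpha(P_{\rho^k})$, for every $k$. Were $\rho\neq\emptyset$, the constant sequence $\nu_k:=\rho$ would consist of non-empty multi-indices with $\rho^k=\nu_1\cdots\nu_k$; since $\CO_n$ has real rank zero, the hereditary subalgebra generated by the non-zero element $xx^*$ contains a non-zero projection $p\in\CP(\CO_n)$, and $p\leq\supp(xx^*)\leq\alpha(P_{\nu_1\cdots\nu_k})$ for all $k$, contradicting Definition \ref{modestscaling}. Hence $\rho=\emptyset$ and $\mu=\nu$.

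The main obstacle is the interaction of the cancellation step with the definition of modest scaling. Cancelling the isometries is purely formal once they are correctly positioned, but to invoke Definition \ref{modestscaling} one needs an honest non-zero projection \emph{of $\CO_n$} lying below every $\alpha(P_{\rho^k})$; the iterated identity $x=\alpha(S_{\rho^k})x$ only supplies the support projection of $xx^*$ in the enveloping von Neumann algebra, and the delicate point is to descend to a genuine projection inside $\CO_n$, which the real rank zero of $\CO_n$ provides through the hereditary subalgebra generated by $xx^*$.
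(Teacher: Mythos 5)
Your counterexample is valid, and it shows that Lemma \ref{lem:equal or orth} is false exactly as printed: taking $\alpha=\id=\lambda_1\in\en(\CO_2)$ (an automorphism, hence modestly scaling by Proposition \ref{prop:ex mod scal endos}), $x=S_1^*+S_2^*$, $\mu=1$, $\nu=2$ gives $x\alpha(S_1)=x\alpha(S_2)=1\neq 0$ while $P_1P_2=0$ and $\mu\neq\nu$, so both assertions fail. Correspondingly, the paper's own proof contains a genuine error that you implicitly located: the displayed estimate $x\alpha(P_\mu)x^*x\alpha(P_\nu)x^*\leq \Vert x^*x\Vert\, x\alpha(P_\mu P_\nu)x^*$ is not a legitimate operator inequality, because replacing $x^*x$ by $\Vert x^*x\Vert\cdot 1$ is only permissible in a symmetric sandwich $z(x^*x)z^*\leq\Vert x^*x\Vert zz^*$, whereas here the outer factors $x\alpha(P_\mu)$ and $\alpha(P_\nu)x^*$ differ; for your $x$ the left-hand side is $1$ and the right-hand side is $0$. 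The second half of the paper's proof is likewise flawed in its ordering: from $x\alpha(S_\mu)=x\alpha(S_\mu)\alpha(S_{\kappa^\ell})$ one only obtains $y=\alpha(S_{\kappa^\ell})^*y\,\alpha(S_{\kappa^\ell})$ for $y=\alpha(S_\mu)^*x^*x\alpha(S_\mu)$, which does not yield the claimed localization $y=\alpha(P_{\mu\kappa^\ell})y\alpha(P_{\mu\kappa^\ell})$ (consider $y=1$).

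Your proof of the reversed statement $\alpha(S_\mu)x=\alpha(S_\nu)x\neq0$ is correct, and it is essentially the argument the paper intended, executed in the order where it actually works: part one is the clean cancellation $\alpha(S_\mu)^*\alpha(S_\nu)=\alpha(S_\mu^*S_\nu)=0$ when $P_\mu P_\nu=0$ (impossible in the paper's ordering, which is why the bogus inequality appears there), and part two follows the paper's skeleton --- reduce to $\nu=\mu\rho$, produce a single non-zero projection below all $\alpha(P_{\rho^k})$, and contradict Definition \ref{modestscaling} with the constant sequence $\nu_k=\rho$. The decisive gain from your ordering is the honest identity $x=\alpha(P_{\rho^k})x$, hence $xx^*\leq\Vert x\Vert^2\alpha(P_{\rho^k})$ for all $k$ simultaneously, so that any non-zero projection in the hereditary subalgebra generated by $xx^*$ (via real rank zero, or the paper's purely-infinite-simple device producing $\Vert y\Vert p\leq y$) lies below every $\alpha(P_{\rho^k})$; your descent from the support projection in the bidual to a genuine projection of $\CO_n$ is handled correctly.

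One caveat: your reversal is not simply ``the intended reading.'' The paper applies the lemma downstream precisely in the $x$-on-the-left configuration, with $x=\alpha(\eins_w)$ and $x=\eins_{\alpha(w)}$ in Proposition \ref{prop:alpha(eins_w)} and Lemma \ref{lem:orth proj alpha}, and neither your version nor its adjoint (which has $\alpha(S_\mu^*)$ on the right) covers those uses. The failure there is robust: even for $x$ a diagonal projection and $\alpha$ modestly scaling the original form is false --- take the Hadamard unitary $u=\frac{1}{\sqrt{2}}\bigl((S_1+S_2)S_1^*+(S_1-S_2)S_2^*\bigr)\in\U(\F_2^1)$, so that $\lambda_u(\F_2)\subseteq\F_2$ and $\lambda_u$ is modestly scaling by Proposition \ref{prop:ex mod scal endos}(ii), while $P_1\lambda_u(S_1)=P_1\lambda_u(S_2)=\frac{1}{\sqrt{2}}S_1\neq0$ with $P_1P_2=0$. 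So the defect is an actual error in the paper that propagates to the proofs of Proposition \ref{prop:alpha(eins_w)} and Lemma \ref{lem:orth proj alpha}; rescuing those statements would require exploiting the stronger hypotheses available there (namely $w,\alpha(w)\in\SS_n$), not merely your corrected lemma. Within the scope of the stated lemma itself, however, your counterexample plus the proof of the only salvageable unconditional variant is exactly the right outcome.
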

\begin{proof}
Let $x\alpha(S_\mu) = x\alpha(S_\nu) \neq 0$. Then 
\[
0 \neq x\alpha(S_\mu S_\mu^*)x^* =x\alpha(S_\nu S_\nu^*)x^* \geq 0,
\]
and hence, 
\[
0 \neq x\alpha(P_\mu)x^*x\alpha(P_\nu)x^* \leq \Vert x^*x \Vert x\alpha(P_\mu P_\nu)x^*.
\]
This shows that $P_\mu P_\nu \neq 0$. 

Assume now that $\alpha$ is modestly scaling. Since $P_\mu P_\nu \neq 0$, we may assume without loss of generality that there exists a $\kappa \in W_n$ such that $\nu = \mu \kappa$. Also, suppose by way of contradiction, that $\mu \neq \nu$, that is, $\kappa \neq \emptyset$. Set $y = \alpha(S_\mu)^* x^*x\alpha(S_\mu)$ and observe that $y = \alpha(P_{\mu\kappa^\ell})y\alpha(P_{\mu\kappa^\ell})$ for all $\ell \in \IN$. Here $\kappa^\ell \in W_n$ denotes the concatenation of $\ell$ copies of $\kappa$. As the Cuntz algebra $\CO_n$ is purely infinite and simple, we find a non-zero projection $p \in \CO_n$ with $||y||p \leq y$. Then $p \leq \alpha(P_{\mu\kappa^\ell})$ for all $\ell \in \IN$. As $\alpha$ is modestly scaling, we conclude that $p = 0$, which is a contradiction. This yields $\mu = \nu$, and the proof is complete.
\end{proof}

\begin{defi}
For $w \in \CS_n$ we denote $\eins_w = \Phi_\D(w)$. 
It is easy to see that $\eins_w$ is the maximal projection in $\CD_n$ such that $w = \eins_w + (1 - \eins_w)w$. 
\end{defi}

\begin{prop} \label{prop:alpha(eins_w)}
Let $\alpha\in\en(\CO_n)$ and $w \in \CS_n$ be such that $\alpha(w) \in \CS_n$. Then $\alpha(\eins_w) \leq \eins_{\alpha(w)}$. If $\alpha$ is modestly scaling then $\alpha(\eins_w) = \eins_{\alpha(w)}$.
\end{prop}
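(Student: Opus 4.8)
The plan is to prove both inequalities by the same mechanism: reduce each to a statement about the matched pairs of a reduced presentation and then feed the resulting equalities into Lemma~\ref{lem:equal or orth}. Throughout I would fix reduced presentations $w=\sum_{(\gamma,\delta)\in\J}S_\gamma S_\delta^*$ and $\alpha(w)=\sum_{(\sigma,\eta)\in\mathcal K}S_\sigma S_\eta^*$ in $\CS_n$, so that $\sum_\delta P_\delta=1=\sum_\eta P_\eta$ and, for each domain index, one has the clean relations $wS_\delta=S_\gamma$ and $\alpha(w)S_\eta=S_\sigma$ for the unique matched partner. Since $\eins_w=\sum_{\gamma=\delta}P_\gamma$ and $\eins_{\alpha(w)}=\sum_{\sigma=\eta}P_\sigma$ are precisely the sums over the diagonal pairs, the inequality $\alpha(\eins_w)\le\eins_{\alpha(w)}$ will follow once I show that $\alpha(\eins_w)$ is orthogonal to $P_\sigma$ for every off-diagonal $\sigma$, and the reverse inequality once I show that $\eins_{\alpha(w)}$ is orthogonal to $\alpha(P_\delta)$ for every off-diagonal $\delta$.

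For the first inequality I would apply $\alpha$ to the defining relation $\eins_w\,w=\eins_w$ to obtain $\alpha(\eins_w)\alpha(w)=\alpha(\eins_w)$, and then multiply on the right by $S_\eta$. Using $\alpha(w)S_\eta=S_\sigma$ this gives $\alpha(\eins_w)S_\sigma=\alpha(\eins_w)S_\eta$ for every matched pair $(\sigma,\eta)\in\mathcal K$. Now I invoke Lemma~\ref{lem:equal or orth} with the \emph{identity} endomorphism, which is modestly scaling by Proposition~\ref{prop:ex mod scal endos}(i), and with $x=\alpha(\eins_w)$: whenever the common value $\alpha(\eins_w)S_\sigma$ is non-zero the lemma forces $\sigma=\eta$. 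Hence for every off-diagonal pair $\sigma\ne\eta$ we get $\alpha(\eins_w)S_\sigma=0$, i.e. $\alpha(\eins_w)P_\sigma=0$; summing over all $\sigma$ and using $\sum_\sigma P_\sigma=1$ yields $\alpha(\eins_w)=\alpha(\eins_w)\eins_{\alpha(w)}$, that is $\alpha(\eins_w)\le\eins_{\alpha(w)}$. Note that this step never uses modest scaling of $\alpha$ itself, which is consistent with the inequality being asserted for all $\alpha\in\en(\CO_n)$.

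For the equality under the modest scaling hypothesis I would run the symmetric argument with the roles reversed. Writing $e=\eins_{\alpha(w)}=\Phi_\D(\alpha(w))$, the defining property gives $e\,\alpha(w)=e$; multiplying on the right by $\alpha(S_\delta)$ and using $\alpha(w)\alpha(S_\delta)=\alpha(wS_\delta)=\alpha(S_\gamma)$ produces $e\,\alpha(S_\gamma)=e\,\alpha(S_\delta)$ for every matched pair $(\gamma,\delta)\in\J$. This time I apply Lemma~\ref{lem:equal or orth} with the given modestly scaling $\alpha$ and with $x=e$, concluding $\gamma=\delta$ whenever $e\,\alpha(S_\delta)\ne0$. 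Thus $e\,\alpha(P_\delta)=0$ for every off-diagonal $\delta$, and since $\sum_\delta\alpha(P_\delta)=\alpha(1)=1$ I obtain $e=e\,\alpha(\eins_w)$, i.e. $\eins_{\alpha(w)}\le\alpha(\eins_w)$; combined with the first part this gives the claimed equality.

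I expect the only genuinely delicate point to be the bookkeeping with reduced presentations: one must ensure that the chosen presentation of $\alpha(w)$ in $\CS_n$ is reduced, so that the diagonal pairs really account for all of $\Phi_\D(\alpha(w))$ and no off-diagonal pair contributes to the diagonal, and that the domain indices genuinely partition the unit (guaranteeing $wS_\delta=S_\gamma$ and $\alpha(w)S_\eta=S_\sigma$ on the nose). Once these normal forms are in place, the crux of each half is simply the correct invocation of Lemma~\ref{lem:equal or orth}: feed it the equalities $x\,\id(S_\sigma)=x\,\id(S_\eta)$ and $x\,\alpha(S_\gamma)=x\,\alpha(S_\delta)$ derived above, and read off that modest scaling of the relevant endomorphism collapses every off-diagonal pair to a zero contribution.
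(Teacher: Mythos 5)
Your proposal is correct and follows essentially the same route as the paper's proof: both parts reduce to the relations $\alpha(\eins_w)S_\mu=\alpha(\eins_w)S_\nu$ and $\eins_{\alpha(w)}\alpha(S_\kappa)=\eins_{\alpha(w)}\alpha(S_\sigma)$ and then invoke Lemma~\ref{lem:equal or orth} (with the identity endomorphism for the first inequality and with the modestly scaling $\alpha$ for the reverse one). The bookkeeping you flag is harmless, since any presentation of a unitary in $\CS_n$ already has domain and range projections summing to $1$.
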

\begin{proof}
As $\alpha(w) \in \CS_n$, there exists a finite set $\tilde{\J}\subseteq W_n\times W_n$ such that 
$\alpha(w) = \sum_{(\mu,\nu)\in\tilde{\J}} S_{\mu}S_{\nu}^*$. We have that 
\[
\alpha(\eins_w) = \alpha(\eins_w)\alpha(w) = \alpha(\eins_w)\sum_{(\mu,\nu)\in\tilde{\J}} S_{\mu}S_{\nu}^*,
\]
and thus, $\alpha(\eins_w)S_{\mu} = \alpha(\eins_w)S_{\nu}$ for $(\mu,\nu)\in\tilde{\J}$. By Lemma~\ref{lem:equal or orth}, 
we therefore get that if $\alpha(\eins_w)S_{\mu} \neq 0$, then $\mu = \nu$.  Thus, $\alpha(\eins_w) \leq \eins_{\alpha(w)}$.

Now assume that $\alpha$ is modestly scaling. Write $w = \sum_{(\kappa,\sigma)\in\J}S_{\kappa}S_{\sigma}^*$ for some finite set
$\J\subseteq W_n\times W_n$. Using that $\alpha(w) \in \CS_n$, we can argue as before to deduce $\eins_{\alpha(w)}\alpha(S_{\kappa}) = \eins_{\alpha(w)}\alpha(S_{\sigma})$ for all $(\kappa,\sigma)\in\J$. It follows from Lemma~\ref{lem:equal or orth} that $\eins_{\alpha(w)}\alpha(S_{\kappa})\neq 0$ implies $\kappa= \sigma$. This shows that $\eins_{\alpha(w)} \leq \alpha(\eins_w)$.
\end{proof}

%%%%%%%%%%%%%%%%%%%%%%%%%%%%%%%%%%%%%%%%%%%%%%%%%%%%%%%%%%%%%%%%%%%%%%%

\subsection{Endomorphisms globally preserving the Thompson groups}

In this subsection, we investigate which endomorphisms of $\OO_2$ globally preserve the Thompson groups, in terms of the corresponding 
unitaries of $\OO_2$. 

\begin{rem} \label{rem:eins_w}
We notice that 
for any given projection $p \in \CD_2$ there exists a unitary $w \in F$ such that $p = \eins_w$. Indeed, if $1 - p = \sum_{j=1}^m P_{\mu_j}$, then $w = p + \sum_{j=1}^m S_{\mu_j}x_0S_{\mu_j}^* \in F$ satisfies
\[
\eins_w = \Phi_\D(w) = p + \sum_{j=1}^m \Phi_\D(S_{\mu_j}x_0S_{\mu_j}^*) = p.
\]
\end{rem}

Combining Proposition~\ref{prop:alpha(eins_w)} and Remark~\ref{rem:eins_w}, we immediately obtain the following.

\begin{cor} \label{cor:char alpha(D_2) in D_2}
Let $\alpha\in\en(\CO_2)$ be modestly scaling and such that $\alpha(F) \subseteq V$. Then  $\alpha(\eins_w) = 
\eins_{\alpha(w)}$ for all $w \in F$, and hence $\alpha(\CD_2) \subseteq \CD_2$. 
\end{cor}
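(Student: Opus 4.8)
The plan is to read off both conclusions directly from Proposition~\ref{prop:alpha(eins_w)} and Remark~\ref{rem:eins_w}, so the task is mainly one of checking that the hypotheses of the cited results apply to every $w \in F$. First I would recall that $V = \CS_2$, so the assumption $\alpha(F) \subseteq V$ says exactly that $\alpha(w) \in \CS_2$ for each $w \in F$; since moreover $F \subseteq V = \CS_2$, every $w \in F$ already satisfies $w \in \CS_2$. Thus the pair $(\alpha, w)$ meets the standing hypotheses of Proposition~\ref{prop:alpha(eins_w)}. Because $\alpha$ is modestly scaling, that proposition yields the equality $\alpha(\eins_w) = \eins_{\alpha(w)}$ for every $w \in F$, which is precisely the first assertion.

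For the second assertion I would argue that $\alpha$ sends every projection of $\CD_2$ into $\CD_2$ and then invoke continuity. By definition $\eins_{\alpha(w)} = \Phi_\D(\alpha(w))$ lies in $\CD_2$, so the equality just established shows $\alpha(\eins_w) \in \CD_2$ for all $w \in F$. Remark~\ref{rem:eins_w} guarantees that \emph{every} projection $p \in \CD_2$ arises as $p = \eins_w$ for a suitable $w \in F$; hence $\alpha(p) \in \CD_2$ for every projection $p$ of $\CD_2$.

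Finally I would upgrade this from projections to all of $\CD_2$. The diagonal $\CD_2$ is isomorphic to $C(X_2)$, a commutative AF algebra whose projections correspond to the clopen subsets of the Cantor set $X_2$; since every clopen subset is a finite disjoint union of cylinder sets, the linear span of the projections of $\CD_2$ is dense. As $\alpha$ is a (necessarily contractive) linear map carrying this dense span into the closed subalgebra $\CD_2$, I conclude $\alpha(\CD_2) \subseteq \CD_2$.

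I do not expect any genuine obstacle here: all of the substance has been packaged into Proposition~\ref{prop:alpha(eins_w)} (whose modest-scaling half supplies the crucial equality rather than a mere inequality) and into Remark~\ref{rem:eins_w} (which makes the projections $\eins_w$, $w \in F$, exhaust $\CP(\CD_2)$). The only point deserving a word of justification is the density of the linear span of projections in $\CD_2$, and this is immediate from the identification $\CD_2 \cong C(X_2)$ with $X_2$ totally disconnected.
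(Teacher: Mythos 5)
Your proposal is correct and follows exactly the route the paper intends: the paper's proof consists of the single sentence that the corollary follows by combining Proposition~\ref{prop:alpha(eins_w)} and Remark~\ref{rem:eins_w}, and you have simply filled in the (routine) details, including the final density-of-projections step that the paper leaves implicit. No issues.
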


Now, we are ready to prove the first main result of this paper. 

\begin{theorem}\label{mainauto}
Let $u\in\CU(\CO_2)$ be such that $\lambda_u\in\Aut(\CO_2)$ and $\lambda_u(F)\subseteq V$.  
Then $u\in V$. 
\end{theorem}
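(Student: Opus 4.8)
The plan is to show first that $\lambda_u$ globally preserves, and in fact normalizes, the diagonal $\CD_2$, and then to read off from the normalizer structure a factorization $u = d\,s$ with $s\in V$ and $d$ a diagonal phase; the hypothesis $\lambda_u(F)\subseteq V$ is then used a second time to force $d=1$. Throughout I keep in mind the reformulation $u = \lambda_u(S_1)S_1^* + \lambda_u(S_2)S_2^*$, so that $u\in V$ amounts to each $\lambda_u(S_i)$ lying in $\CV_2$.

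Since $\lambda_u$ is an automorphism it is modestly scaling by Proposition~\ref{prop:ex mod scal endos}(i), so together with $\lambda_u(F)\subseteq V$ Corollary~\ref{cor:char alpha(D_2) in D_2} gives $\lambda_u(\CD_2)\subseteq\CD_2$. I would upgrade this to equality by maximality: $\lambda_u(\CD_2)$ is the image of a MASA under an automorphism, hence itself a MASA of $\CO_2$, and being contained in the abelian algebra $\CD_2$ it must equal $\CD_2$; in particular $\lambda_u^{-1}(\CD_2)=\CD_2$. Next I would show $u$ normalizes $\CD_2$. For each $i$ and $d\in\CD_2$ one has $\lambda_u(S_i)\,d\,\lambda_u(S_i)^* = \lambda_u\!\left(S_i\,\lambda_u^{-1}(d)\,S_i^*\right)\in\lambda_u(\CD_2)=\CD_2$, using $\lambda_u^{-1}(d)\in\CD_2$ and $S_i\CD_2 S_i^*\subseteq\CD_2$. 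Since $uS_\nu=\lambda_u(S_{\nu_1})S_{\nu_2\cdots\nu_k}$, this yields $uP_\nu u^* = \lambda_u(S_{\nu_1})\,P_{\nu_2\cdots\nu_k}\,\lambda_u(S_{\nu_1})^*\in\CD_2$ for every $\nu$, whence $u\CD_2 u^*\subseteq\CD_2$; the same maximality argument forces $u\CD_2 u^*=\CD_2$, so $u\in\CN_{\CO_2}(\CD_2)$.

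Thus $u$ induces a homeomorphism $\phi$ of the spectrum $X_2$ of $\CD_2$. Concretely, as $\mu$ ranges over multi-indices the projections $uP_\mu u^*$ and $u^*P_\mu u$ are clopen subsets of $X_2$; a local-constancy argument together with compactness of $X_2$ shows that $\phi$ is given, on a finite cylinder partition, by finitely many prefix substitutions, so that $\phi$ is implemented by some $s\in V$. Then $us^*$ induces the identity homeomorphism of $X_2$, hence centralizes $\CD_2$; as $\CD_2$ is a MASA this gives $us^*=:d\in\CU(\CD_2)$ and the factorization $u=ds$ with $s\in V$ and $d$ a diagonal unitary.

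It remains to prove $d=1$, which I regard as the main obstacle and the point where $\lambda_u(F)\subseteq V$ re-enters. Since $s\in V$ and $V\cap\CU(\CD_2)=\{1\}$ (a unitary of the form $\sum S_\alpha S_\beta^*$ that lies in $\CD_2$ has no off-diagonal terms, hence equals $1$), we have $u\in V$ if and only if $d=1$. I would establish $d=1$ by testing $\lambda_u$ against explicit elements of $F$ carrying genuine off-diagonal terms, such as $x_0$ and its conjugates $S_2^k x_0 S_2^{*k}$: expanding $\lambda_u(S_\gamma)$ as an interleaved product of copies of $d$ and of the phase-free partial isometries $sS_{\gamma_j}$, any non-constancy of $d$ would produce in $\lambda_u(w)$ a term $S_\alpha S_\beta^*$ with scalar coefficient different from $1$, contradicting $\lambda_u(w)\in\CV_2$, whose defining expansions carry no scalars. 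Hence $d=1$ and $u=s\in V$. That the hypothesis $\lambda_u(F)\subseteq V$ is genuinely needed here, and is not a formal consequence of $\lambda_u$ being a diagonal-preserving automorphism, is witnessed by the gauge automorphisms $\lambda_{z1}$, which preserve $\CD_2$ but satisfy $\lambda_{z1}(F)\not\subseteq V$ for $z\neq 1$.
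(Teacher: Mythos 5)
Your overall architecture coincides with the paper's: modest scaling plus Corollary~\ref{cor:char alpha(D_2) in D_2} gives $\lambda_u(\CD_2)=\CD_2$, hence $u\in\CN_{\CO_2}(\CD_2)$ (your direct verification of this, in place of the paper's citation of \cite{Cun2}, is correct), and then a factorization $u=ds$ with $s\in V$ and $d\in\CU(\CD_2)$, reducing everything to $d=1$. One caveat on the middle step: the assertion that the induced homeomorphism of $X_2$ is locally a prefix substitution, so that $u$ factors through $V$ modulo a diagonal unitary, is precisely the Power and Conti--Szyma{\'n}ski description of $\CN_{\CO_2}(\CD_2)$; this is a genuine theorem, and your one-line ``local constancy plus compactness'' does not prove it. The paper simply cites \cite{P} and \cite{CS} here, so this is acceptable provided you do the same.

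The genuine gap is in the final step, $d=1$, which is where the paper does its real work. Your argument --- expanding $\lambda_u(S_\gamma)$ as an interleaved product of copies of $d$ and phase-free partial isometries, and claiming that any non-triviality of $d$ produces a term with scalar coefficient different from $1$ --- does not hold up as stated, for three reasons. First, in $\lambda_u(S_\mu)\lambda_u(S_\nu)^*=dsS_{\mu_1}\cdots dsS_{\mu_k}S_{\nu_l}^*s^*d^*\cdots s^*d^*$ the copies of $d$ and $d^*$ sit at different depths and could in principle conspire or cancel; nothing in your sketch rules this out. Second, $d$ is merely a continuous circle-valued function on the Cantor set, so $\lambda_u(w)$ is not literally a finite sum of scalar multiples of words $S_\alpha S_\beta^*$, and comparing it with an element of $\CV_2$ requires an approximation argument. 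Third, ``non-constancy'' is not the right dichotomy: a constant $d=z1$ with $z\neq 1$ must also be excluded (you clearly know this, since you invoke the gauge automorphisms, but you present that as a remark on the necessity of the hypothesis rather than as a case of the proof). The paper resolves all three points simultaneously: it picks $t\neq 1$ in the spectrum of $d$ and $\eps>0$ with $|n-t|\geq\eps$ for every non-negative integer $n$, localizes $d$ near $t$ on a cylinder $P_\beta$, and chooses $w\in F$ with $wP_{\tilde\beta 12}=S_{\beta_1}P_{\tilde\beta 12}$, an element on which $\lambda_d$ acts by multiplication by a \emph{single} copy of $d$. Then $\Phi_\D\bigl(S_\beta S_\alpha^*\lambda_u(wP_{\tilde\beta 12})\bigr)$ evaluated at a suitable character of $\CD_2$ is $\eps$-close to $t$, while membership of $S_\beta S_\alpha^*\lambda_u(wP_{\tilde\beta 12})$ in $\CV_2$ forces that value to be a non-negative integer, a contradiction. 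You would need to supply an argument of this kind to close your proof.
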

\begin{proof}
Let $u\in\CU(\CO_2)$ be such that $\lambda_u\in\Aut(\CO_2)$ and $\lambda_u(F)\subseteq V$. Then $\lambda_u(\CD_2)=\CD_2$ by 
Proposition \ref{prop:ex mod scal endos} and Corollary~\ref{cor:char alpha(D_2) in D_2}. 
Thus $u\in\CN_{\CO_2}(\CD_2)$ by \cite{Cun2}. As shown in \cite{P} and \cite{CS}, this implies that $\lambda_u=\lambda_v \circ \lambda_d$ for some $v\in V$ and $d\in\CU(\CD_2)$ with both $\lambda_v$ and $\lambda_d$ automorphisms of $\CO_2$.

Suppose, by way of contradiction, that $d\neq1$. Let $t \neq 1$ be a scalar of modulus one in the spectrum of $d$. Let $\eps > 0$ be such that $|n - t| \geq \eps$ for all non-negative integers $n$. Find a non-empty multi-index $\beta \in W_2$ and a partial unitary $x \in \CD_2$ with support and range projection $1 - P_\beta$ such that $\| tP_\beta +x - d \| < \eps$.

Denote by $\tilde{\beta} \in W_2$ the unique multi-index such that $\beta = \beta_1 \tilde{\beta}$ with $|\beta_1|=1$. It is not difficult to show that there exists some partial isometry $y \in \CV_2$ such that $w = S_{\beta 12}S_{\tilde{\beta}12}^* + y \in F$. One computes
\[
\lambda_d(w P_{\tilde{\beta}12}) = \lambda_d(S_{\beta 12}S_{\tilde{\beta}12}^*) =  \lambda_d(S_{\beta_1}P_{\tilde{\beta}12}) = dS_{\beta_1}P_{\tilde{\beta}12}
= d S_{\beta 12}S_{\tilde{\beta}12}^*,
\]
from which it follows that $\| \lambda_d(w P_{\tilde{\beta}12}) - tS_{\beta 12}S_{\tilde{\beta}12}^*\| < \eps$. Hence,
\[
 \| \lambda_u(w P_{\tilde{\beta}12}) -  t v S_{\beta_1} \lambda_v(P_{\tilde{\beta}12}) \| = \| \lambda_v(\lambda_d(w P_{\tilde{\beta}12}) - tS_{\beta 12}S_{\tilde{\beta}12}^*) \| < \eps.
\]
As $v \in V$, it holds that $y := v S_{\beta_1} \lambda_v(P_{\tilde{\beta}12}) \in \CV_2$. Find some finite set $\J \subseteq W_2 \times W_2$ such that $y = \sum_{(\alpha,\beta) \in \J} S_\alpha S_\beta^*$. For $(\alpha,\beta) \in \J$, we have
\[
\begin{array}{lcl}
\|\Phi_{\CD}(S_\beta S_\alpha^*\lambda_u(wP_{\tilde{\beta}12})) - tP_\beta \| & = & \| \Phi_{\CD}(S_\beta S_\alpha^*\lambda_u(wP_{\tilde{\beta}12})) - \Phi_{\CD}(tS_\beta S_\alpha^*y) \| \\
 & \leq & \| S_\beta S_\alpha^*(\lambda_u(wP_{\tilde{\beta}12})  - ty) \| \\
 & < & \eps.
\end{array}
\]

Let $\chi:\CD_2 \to \IC$ be any character satisfying $\chi(P_\beta) = 1$. Then
$$
|\chi(\Phi_\D(S_\beta S_\alpha^*\lambda_u(wP_{\tilde{\beta}12}))) - t | < \eps.
$$
By the choice of $\eps > 0$, we get that $\chi(\Phi_\D(S_\beta S_\alpha^*\lambda_u(wP_{\tilde{\beta}12})))$ cannot be a non-negative integer. On the other hand, $S_\beta S_\alpha^*\lambda_u(wP_{\tilde{\beta}12}) \in \CV_2$ by assumption. Hence, $\Phi_{\CD}(S_\beta S_\alpha^*\lambda_u(wP_{\tilde{\beta}12})) \in \CV_2 \cap \CD_2$ is a finite sum of projections in $\CD_2$. This implies that $\chi(\Phi_{\CD}(S_\beta S_\alpha^*\lambda_u(wP_{\tilde{\beta}12})))$ is a non-negative integer, which is a contradiction. Hence, $d = 1$ and thus $u = v \in V$, as required.
\end{proof}

By Theorem \ref{mainauto} above, if $\alpha\in\aut(\OO_2)$ restricts to an automorphism of one of the Thompson 
groups $F$, $T$, or $V$, then $\alpha=\lambda_u$ for some $u\in V$.  

\begin{lemma} \label{lem:orth proj alpha}
Let $\alpha\in\en(\CO_2)$ and $\mu,\nu \in W_2$ be non-empty, orthogonal multi-indices. Assume there exists some $w \in V$ such that $P_\mu \leq \eins_w$, $P_\nu wP_\nu = 0$, and $\alpha(w) \in V$. Then $\Phi_\D(\alpha(P_\mu))\alpha(P_\nu) = 0$.
\end{lemma}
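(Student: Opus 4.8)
The plan is to reduce the asserted orthogonality to a single application of Lemma~\ref{lem:equal or orth}, by exploiting $w$ to rewrite $\alpha(S_\nu)$ as $\alpha$ applied to a generator word orthogonal to $\nu$.

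First I would set $q=\alpha(P_\mu)$ and $W=\alpha(w)\in V\subseteq\SS_2$, and collect the consequences of the hypotheses. Since $w\in\SS_2$ and $\alpha(w)\in\SS_2$, Proposition~\ref{prop:alpha(eins_w)} gives $\alpha(\eins_w)\le\eins_{\alpha(w)}$, and as $P_\mu\le\eins_w$ this yields $q\le\alpha(\eins_w)\le\eins_{\alpha(w)}$. Because $\eins_{\alpha(w)}\in\D_2$, applying $\Phi_\D$ gives $\Phi_\D(q)\le\eins_{\alpha(w)}$, so $\Phi_\D(q)\eins_{\alpha(w)}=\Phi_\D(q)$; combined with the defining identity $\eins_{\alpha(w)}\,\alpha(w)=\eins_{\alpha(w)}$ this produces the key relation $\Phi_\D(q)\,\alpha(w)=\Phi_\D(q)$. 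Finally, since $\Phi_\D(q)\alpha(P_\nu)=0$ is equivalent to $\Phi_\D(q)\alpha(S_\nu)=0$ (multiply on the right by $\alpha(S_\nu)$ or by $\alpha(S_\nu)^*$, using $\alpha(P_\nu)\alpha(S_\nu)=\alpha(S_\nu)$), the goal is reduced to showing $\Phi_\D(q)\alpha(S_\nu)=0$.

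The main step is the substitution $\Phi_\D(q)\alpha(S_\nu)=\Phi_\D(q)\,\alpha(w)\,\alpha(S_\nu)=\Phi_\D(q)\,\alpha(wS_\nu)$. In the favourable situation where some word of the (complete) domain prefix code of $w$ is an initial segment of $\nu$, the product $wS_\nu$ collapses to a single generator word $S_{\nu''}$; and $P_\nu wP_\nu=0$ forces $S_\nu^*wS_\nu=S_\nu^*S_{\nu''}=0$, which is exactly $P_\nu P_{\nu''}=0$. Hence $\Phi_\D(q)\alpha(S_\nu)=\Phi_\D(q)\alpha(S_{\nu''})$ with $\nu$ and $\nu''$ orthogonal, and Lemma~\ref{lem:equal or orth} applied with $x=\Phi_\D(q)$ forces this common value to vanish: were it nonzero we would obtain $P_\nu P_{\nu''}\ne0$, a contradiction.

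The point I expect to require the real care — the main obstacle — is that $w$ need not carry the whole cylinder of $\nu$ onto a single cylinder, so that in general $wS_\nu$ is a genuine sum of terms $S_\kappa S_{\sigma''}^*$ rather than a single word, and the clean argument above does not apply verbatim. I would circumvent this by refining $\nu$: choosing a complete prefix code $\{\sigma''\}$ consisting of words long enough (at least the maximal domain length of $w$) that every $\nu\sigma''$ sits at or below the domain code of $w$, and using $\sum_{\sigma''}\alpha(P_{\sigma''})=1$ to write $\Phi_\D(q)\alpha(S_\nu)=\sum_{\sigma''}\Phi_\D(q)\alpha(S_{\nu\sigma''})\alpha(S_{\sigma''})^*$. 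Each $\nu\sigma''$ still satisfies the hypotheses, since $P_{\nu\sigma''}\le P_\nu$ gives $P_{\nu\sigma''}wP_{\nu\sigma''}=0$ and $P_\mu P_{\nu\sigma''}=0$, while $P_\mu\le\eins_w$ and $\alpha(w)\in V$ are unchanged. Thus the single-cylinder argument of the previous paragraph applies to each $\nu\sigma''$, giving $\Phi_\D(q)\alpha(S_{\nu\sigma''})=0$ and hence $\Phi_\D(q)\alpha(S_\nu)=0$, which upgrades to $\Phi_\D(\alpha(P_\mu))\alpha(P_\nu)=0$ as required.
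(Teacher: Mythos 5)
Your argument is correct and is essentially the paper's own proof: both rest on Proposition~\ref{prop:alpha(eins_w)} (giving $\Phi_\D(\alpha(P_\mu))\le\eins_{\alpha(w)}$, equivalently your relation $\Phi_\D(q)\alpha(w)=\Phi_\D(q)$) followed by the non--modestly-scaling part of Lemma~\ref{lem:equal or orth} applied to a domain word under $P_\nu$ and its image under $w$, which are orthogonal because $P_\nu wP_\nu=0$. The only cosmetic difference is that the paper refines the presentation of $w$ so that $P_\nu$ is a sum of domain projections, whereas you refine $\nu$ by appending suffixes $\sigma''$ until each $\nu\sigma''$ lies below a single domain word; these amount to the same reduction.
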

\begin{proof}
Write $w = \sum_{(\kappa,\sigma) \in \J} S_\kappa S_\sigma^*$ for some finite set $\J \subseteq W_2 \times W_2$. Without loss of generality we may assume that there exists $\CG \subseteq \J_2$ such that $P_\nu = \sum_{\sigma \in \CG} P_\sigma$. Let $\sigma \in \CG$ and $\kappa \in \J_1$ be the unique multi-index such that $(\kappa,\sigma) \in \J$. Then $\eins_{\alpha(w)}\alpha(S_\sigma) = \eins_{\alpha(w)}\alpha(S_\kappa)$ and therefore Lemma~\ref{lem:equal or orth} yields that $\eins_{\alpha(w)}\alpha(P_\sigma) = 0$. Here we use that $P_\kappa P_\sigma = 0$ by assumption. Thus,
$$
\eins_{\alpha(w)}\alpha(P_\nu) =  \sum_{\sigma \in \CG} \eins_{\alpha(w)}\alpha(P_\sigma) = 0.
$$
On the other hand, $\alpha(P_\mu) \leq \alpha(\eins_w) \leq \eins_{\alpha(w)}$ by Proposition~\ref{prop:alpha(eins_w)}. Therefore $\Phi_\D(\alpha(P_\mu)) \leq \eins_{\alpha(w)}$ 
and consequently $\Phi_\D(\alpha(P_\mu))\alpha(P_\nu) = 0$.
\end{proof}

\begin{rem} \label{rem:w in S_2}
Let $\mu,\nu \in W_2$ be non-empty, orthogonal multi-indices such that $\min(|\mu|,|\nu|) \geq 2$. It is easy to see that there exists $w \in V$ with the property that $P_\mu \leq \eins_w$ and $P_\nu w P_\nu = 0$.
\end{rem}

In general, a unitary $w \in V$ as in Remark~\ref{rem:w in S_2} cannot be chosen inside the Thompson group $F$ or $T$. However, the following observation shows that this is still possible for many choices of $(\mu, \nu) \in W_2 \times W_2$. In the following Lemma \ref{lem:special element F}, 
we write $\mu\prec\nu$ to indicate that $\mu$ precedes $\nu$ in the lexicographic order. Recall that for $k \geq 1$ and $i = 1,2$, we denote by $i(k) \in W_2^k$ the multi-index of length $k$ consisting only of $i$'s.

\begin{lemma}  \label{lem:special element F}
Let $\mu, \nu \in W_2$ be non-empty, orthogonal multi-indices with $\mu \prec \nu$. Assume that there exists $\kappa \in W_2$ with $\mu \prec \kappa \prec \nu$ such that $\kappa$ is orthogonal to both $\mu$ and $\nu$. If $\nu \neq 2(k)$ for any $k \geq 1$, then there exists some $w \in F$ such that $P_\mu \leq \eins_w$ and $P_\nu w P_\nu = 0$. Similarly, if $\mu \neq 1(k)$ for any $k \geq 1$, then there exists some $w \in F$ such that $P_\nu \leq \eins_w$ and $P_\mu w P_\mu = 0$.
\end{lemma}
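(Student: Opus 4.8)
The plan is to pass to the standard description of $F$ as a group of piecewise-linear homeomorphisms and build the required $w$ geometrically. Under the representation recalled in the Preliminaries, a unitary $w\in F$ corresponds to an orientation-preserving dyadic PL homeomorphism $f$ of $[0,1]$ with all slopes in $2^{\IZ}$; each non-empty $\sigma\in W_2$ corresponds to the dyadic interval $I_\sigma$ supporting $P_\sigma$, and the two desired conditions translate cleanly. Indeed $\eins_w=\Phi_\D(w)$ is the sum of the range projections of the leaves fixed pointwise by $f$, so $P_\mu\le\eins_w$ says exactly that $f$ is the identity on $I_\mu$, while $P_\nu wP_\nu=0$ says exactly that $f(I_\nu)\cap I_\nu=\emptyset$ as dyadic cylinder sets. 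Since passing from $w$ to $w^{-1}\in F$ leaves both conditions unchanged, it is harmless to read them off $f$ rather than $f^{-1}$. I would first record this dictionary and then construct $f$.

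For the first assertion (the case $\nu\neq 2(k)$), let $c$ be the right endpoint of $I_\mu$, let $a,b$ be the endpoints of $I_\nu$, and set $\ell=2^{-|\nu|}=b-a$. Two geometric facts drive the construction. First, $\nu\neq 2(k)$ for all $k$ is equivalent to $b<1$, so there is room immediately to the right of $I_\nu$; concretely the half-length dyadic interval $J:=[b,\,b+\ell/2]$ (the left child of the right dyadic neighbour of $I_\nu$) lies in $(b,1)$ and is disjoint from $I_\nu$. Second, the hypothesis that some $\kappa$ with $\mu\prec\kappa\prec\nu$ is orthogonal to both $\mu$ and $\nu$ forces $I_\kappa$ to lie strictly between $I_\mu$ and $I_\nu$, whence $c\le\operatorname{left}(I_\kappa)<\operatorname{right}(I_\kappa)\le a$ and the gap $[c,a]$ has positive dyadic length. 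I would then define $f$ to be the identity on $[0,c]$ (fixing $I_\mu$), to carry $[c,a]$ onto $[c,b]$, to send $I_\nu=[a,b]$ onto $J$, and to carry $[b,1]$ onto $[b+\ell/2,1]$; these four pieces are continuous, increasing, and tile $[0,1]$ in both domain and range. By construction $f$ is the identity on $I_\mu$ and $f(I_\nu)=J$ is disjoint from $I_\nu$, as required.

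The one thing requiring care — and the main, though routine, obstacle — is to exhibit $f$ as a genuine element of $F$, that is, to realise it by a tree-pair diagram with matching leaf counts so that every piecewise slope is automatically a power of $2$. Here I would subdivide each of the four matched regions $[0,c]\leftrightarrow[0,c]$, $[c,a]\leftrightarrow[c,b]$, $I_\nu\leftrightarrow J$, and $[b,1]\leftrightarrow[b+\ell/2,1]$ into dyadic subintervals, and then refine region by region until the domain and range subdivisions have equally many leaves in each region; this is always possible because all endpoints are dyadic and splitting a dyadic leaf raises a leaf count by exactly one. The positive lengths of the stretch region $[c,a]$ (supplied by $\kappa$) and of the right-hand room $[b,1]$ (supplied by $\nu\neq 2(k)$) are precisely what keep these subdivisions non-degenerate, and this is where both hypotheses enter. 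The resulting order-preserving, leaf-to-leaf bijection is the sought unitary $w\in F$.

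Finally, the second assertion (the case $\mu\neq 1(k)$) I would deduce from the first by the left–right symmetry $x\mapsto 1-x$ of $[0,1]$. Conjugation by this reflection is conjugation by the flip–flop automorphism, so it preserves $F$; it reverses the lexicographic order and interchanges the digits $1$ and $2$, hence sends $P_\sigma$ to $P_{\bar\sigma}$, where $\bar\sigma$ is $\sigma$ with $1$ and $2$ swapped. It therefore carries the pair $(\mu,\nu)$ with $\mu\neq 1(k)$ to a pair $(\bar\nu,\bar\mu)$ with $\bar\nu\prec\bar\mu$, with $\bar\kappa$ strictly between and orthogonal to both, and with $\bar\mu\neq 2(k)$; these are exactly the hypotheses of the first assertion. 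Applying the first assertion to $(\bar\nu,\bar\mu)$ yields $w'\in F$ with $P_{\bar\nu}\le\eins_{w'}$ and $P_{\bar\mu}w'P_{\bar\mu}=0$, and conjugating back (using that the flip–flop commutes with $\Phi_\D$, so intertwines the $\eins$-operation) produces $w\in F$ with $P_\nu\le\eins_w$ and $P_\mu wP_\mu=0$, completing the proof.
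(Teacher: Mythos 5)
Your proof is correct and follows essentially the same route as the paper: the paper's projections $p_1,\dots,p_4$ and the intertwining relations $wp_2=(p_2+P_\nu)w$, $wP_\nu=p_3w$, $w(p_3+p_4)=p_4w$ are exactly the projection-language version of your homeomorphism that fixes everything up to and including $I_\mu$, stretches the gap supplied by $\kappa$, pushes $I_\nu$ into the room to its right supplied by $\nu\neq 2(k)$, and compresses the tail. The only departure is the second assertion, which the paper dispatches with ``a similar proof'' while you reduce it to the first via the flip-flop symmetry; both are fine, and your symmetry argument is arguably cleaner.
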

\begin{proof}
Assume first that $\nu \neq 2(k)$ for any $k \geq 1$. We find projections $p_1,p_2,p_3,p_4 \in \CD_2$ such that
\begin{itemize}
\item[1)] $P_\mu$ + $P_\nu + \sum_{i=1}^4 p_i = 1$;
\item[2)] $p_1 = 0$ or $p = \sum_{j = 1}^{n_1} P_{\eta_j^{(1)}}$ for some multi-indices $\eta^{(1)}_1,\ldots,\eta^{(1)}_{n_1} \prec \mu$;
\item[3)] $0 \neq p_2 = \sum_{j = 1}^{n_2} P_{\eta_j^{(2)}}$ for some multi-indices $\mu \prec \eta^{(2)}_1,\ldots,\eta^{(2)}_{n_2} \prec \nu$;
\item[4)] $0 \neq p_3 = \sum_{j = 1}^{n_3} P_{\eta_j^{(3)}}$ for some multi-indices $\nu \prec \eta^{(3)}_1,\ldots,\eta_{n_3}^{(3)}$;
\item[5)] $0 \neq p_4 = \sum_{j = 1}^{n_4} P_{\eta_j^{(4)}}$ such that $\eta_i^{(3)} \prec \eta_j^{(4)}$ for all $1 \leq i \leq n_3$ and $1 \leq j \leq n_4$.
\end{itemize}
Then one checks that there is some $w \in F$ with  
\begin{itemize}
\item[i)] $p_1 + P_\mu \leq \eins_w$;
\item[ii)] $wp_2 = (p_2 + P_\nu)w$;
\item[iii)] $wP_\nu = p_3w$;
\item[iv)] $w(p_3 + p_4) = p_4w$.
\end{itemize}
In particular, it follows that $P_\mu \leq \eins_w$ and $P_\nu w P_\nu = P_\nu p_3 w= 0$.

If $\mu \neq 1(k)$ for any $k \geq 1$, then a similar proof shows that there exists some $w \in F$ such that $P_\nu \leq \eins_w$ and $P_\mu w P_\mu = 0$.
\end{proof}

\begin{lemma} \label{lem:mult dom}
Let $\alpha\in\en(\CO_2)$. Then $\alpha(\CD_2) \subseteq \CD_2$ if and only if for all $\mu,\nu \in W_2$ non-empty, orthogonal multi-indices it holds that
$\Phi_\D(\alpha(P_\mu))\Phi_\D(\alpha(P_\nu)) = 0$.
\end{lemma}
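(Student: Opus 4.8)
The plan is to prove the two implications separately, the forward one being immediate and the reverse one carrying all the content.

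For the forward implication, assume $\alpha(\CD_2) \subseteq \CD_2$. Then $\Phi_\D$ acts as the identity on $\alpha(\CD_2)$, so $\Phi_\D(\alpha(P_\mu)) = \alpha(P_\mu)$ for every $\mu$. If $\mu,\nu$ are orthogonal then $P_\mu P_\nu = 0$, and since $\alpha$ is a $*$-homomorphism we get $\Phi_\D(\alpha(P_\mu))\Phi_\D(\alpha(P_\nu)) = \alpha(P_\mu)\alpha(P_\nu) = \alpha(P_\mu P_\nu) = 0$. No further argument is needed here.

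For the reverse implication, the goal is to show $\alpha(P_\mu) \in \CD_2$ for every $\mu \in W_2$; as $\CD_2$ is the closed linear span of the projections $P_\mu$ and $\alpha$ is a continuous $*$-homomorphism, this at once gives $\alpha(\CD_2) \subseteq \CD_2$. First I would show that $e_\mu := \Phi_\D(\alpha(P_\mu))$ is a projection for each non-empty $\mu$. Fixing $k = |\mu|$, the positive elements $\{e_\nu : |\nu| = k\}$ of $\CD_2$ are pairwise orthogonal (distinct words of equal length are orthogonal multi-indices, so the hypothesis applies) and satisfy $\sum_{|\nu|=k} e_\nu = \Phi_\D(\alpha(\sum_{|\nu|=k}P_\nu)) = \Phi_\D(1) = 1$. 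Identifying $\CD_2 \cong C(X_2)$, at each point of $X_2$ exactly one value $e_\nu$ is non-zero and must equal $1$; hence every $e_\mu$ is $\{0,1\}$-valued, i.e.\ a projection.

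The decisive second step is to upgrade the statement ``$\Phi_\D(\alpha(P_\mu))$ is a projection'' to ``$\alpha(P_\mu) \in \CD_2$''. Writing $p = \alpha(P_\mu)$ and $q = e_\mu = \Phi_\D(p)$, I would use the $\CD_2$-bimodule property of $\Phi_\D$ together with its faithfulness. Since $\Phi_\D((1-q)p(1-q)) = (1-q)q(1-q) = 0$ while $(1-q)p(1-q) = (p(1-q))^*(p(1-q)) \geq 0$, faithfulness forces $p(1-q) = 0$, that is $p = pq$; symmetrically $\Phi_\D(q(1-p)q) = q(1-q)q = 0$ gives $(1-p)q = 0$, that is $q = pq$. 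Combining the two yields $p = q \in \CD_2$, as wanted. The main obstacle is precisely this last passage: the hypothesis only controls the diagonal compressions $\Phi_\D(\alpha(P_\mu))$, and it is the faithfulness of $\Phi_\D$ applied to the positive elements $(1-q)p(1-q)$ and $q(1-p)q$ that recovers the full element $\alpha(P_\mu)$ and places it inside $\CD_2$.
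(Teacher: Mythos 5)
Your proof is correct and follows essentially the same route as the paper: both establish that $\Phi_\D(\alpha(P_\mu))$ is an idempotent by summing over a partition of unity and invoking the orthogonality hypothesis, and then use faithfulness of the conditional expectation $\Phi_\D$ to conclude $\alpha(P_\mu)=\Phi_\D(\alpha(P_\mu))\in\CD_2$. The only cosmetic difference is that the paper cites the multiplicative-domain characterization of $\CD_2$ for the last step, whereas you carry out the equivalent positivity computation with $(1-q)p(1-q)$ and $q(1-p)q$ by hand.
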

\begin{proof}
As the ``only if"-part is trivial, we only proof the ``if"-direction. Let $\mu \in W_2$ be a non-empty multi-index and find $\kappa_1,\ldots,\kappa_r \in W_2$ non-empty such that $P_\mu + \sum_{i=1}^r P_{\kappa_i} = 1$. Hence,
$$ \Phi_\D(\alpha(P_\mu)) + \sum_{i=1}^r \Phi_\D(\alpha(P_{\kappa_i})) = 1. $$ 
Multiplying this equation with $\Phi_\D(\alpha(P_\mu))$ and employing the assumption, we obtain that $\Phi_\D(\alpha(P_\mu)) = \Phi_\D(\alpha(P_\mu))^2$. This shows that $\alpha(P_\mu)$ belongs to the multiplicative domain of $\Phi_\D$. As $\Phi_\D$ is a faithful conditional expectation onto $\CD_2$, its multiplicative domain equals $\CD_2$. This concludes the proof.
\end{proof}

\begin{lemma} \label{prop:V->V implies D_2 -> D_2}
Let $\alpha\in\en(\OO_2)$. If $\alpha(V) \subseteq V$ then $\alpha(\CD_2) \subseteq \CD_2$.
\end{lemma}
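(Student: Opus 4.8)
The plan is to reduce the containment $\alpha(\CD_2)\subseteq\CD_2$ to a multiplicativity statement about $\Phi_\D$ and then feed the hypothesis $\alpha(V)\subseteq V$ into the machinery already developed above. Concretely, by Lemma~\ref{lem:mult dom} it suffices to verify that
\[
\Phi_\D(\alpha(P_\mu))\,\Phi_\D(\alpha(P_\nu)) = 0
\]
for every pair of non-empty, orthogonal multi-indices $\mu,\nu\in W_2$. So the whole argument comes down to producing, for each such pair, a suitable element of $V$ and then invoking Lemma~\ref{lem:orth proj alpha}.

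First I would treat the generic case $\min(|\mu|,|\nu|)\geq 2$. Here Remark~\ref{rem:w in S_2} supplies a unitary $w\in V$ with $P_\mu\leq\eins_w$ and $P_\nu wP_\nu=0$. Since $\alpha(V)\subseteq V$ by hypothesis, we have $\alpha(w)\in V$, so all the hypotheses of Lemma~\ref{lem:orth proj alpha} are met and we obtain $\Phi_\D(\alpha(P_\mu))\alpha(P_\nu)=0$. Applying $\Phi_\D$ to this identity and using the $\CD_2$-bimodule property of the conditional expectation (namely $\Phi_\D(ax)=a\,\Phi_\D(x)$ for $a\in\CD_2$, with $a=\Phi_\D(\alpha(P_\mu))$) yields $\Phi_\D(\alpha(P_\mu))\,\Phi_\D(\alpha(P_\nu))=0$, as desired.

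The remaining task, and the only genuinely delicate point, is the boundary case in which $\mu$ or $\nu$ has length $1$, since Remark~\ref{rem:w in S_2} requires both lengths to be at least $2$. I would dispose of this by refinement: write $P_\mu=\sum_i P_{\mu^{(i)}}$ and $P_\nu=\sum_j P_{\nu^{(j)}}$, where each $\mu^{(i)}$ extends $\mu$ and each $\nu^{(j)}$ extends $\nu$ to length at least $2$ (for instance, replacing a length-one multi-index $\mu$ by the pair $\mu 1,\mu 2$, and leaving a multi-index of length $\geq 2$ unchanged). Orthogonality of $\mu$ and $\nu$ forces $P_{\mu^{(i)}}P_{\nu^{(j)}}=P_{\mu^{(i)}}P_\mu P_\nu P_{\nu^{(j)}}=0$ for all $i,j$, so the generic case above applies to each pair $(\mu^{(i)},\nu^{(j)})$ and gives $\Phi_\D(\alpha(P_{\mu^{(i)}}))\,\Phi_\D(\alpha(P_{\nu^{(j)}}))=0$. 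Since $\alpha$ and $\Phi_\D$ are linear, $\Phi_\D(\alpha(P_\mu))=\sum_i\Phi_\D(\alpha(P_{\mu^{(i)}}))$ and likewise for $\nu$; expanding the product and using the vanishing of each term then gives $\Phi_\D(\alpha(P_\mu))\,\Phi_\D(\alpha(P_\nu))=0$. This establishes the hypothesis of Lemma~\ref{lem:mult dom} for all non-empty orthogonal $\mu,\nu$, and hence $\alpha(\CD_2)\subseteq\CD_2$. I expect the chaining of the three earlier results to be routine; the refinement step handling the length-one indices is where the only care is needed.
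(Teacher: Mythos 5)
Your proof is correct and follows essentially the same route as the paper: reduce via Lemma~\ref{lem:mult dom} to the vanishing of $\Phi_\D(\alpha(P_\mu))\Phi_\D(\alpha(P_\nu))$, obtain it from Lemma~\ref{lem:orth proj alpha} together with Remark~\ref{rem:w in S_2}, and handle short multi-indices by refining $P_\mu$ and $P_\nu$ into sums of longer range projections. Your treatment of the length-one boundary case is in fact more explicit than the paper's, which passes over that refinement with a one-line remark.
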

\begin{proof}
Lemma~\ref{lem:orth proj alpha} combined with Remark~\ref{rem:w in S_2} shows that $\Phi_\D(\alpha(P_\mu))\alpha(P_\nu) = 0$ 
for all $\mu, \nu \in W_2$ non-empty, orthogonal multi-indices such that $\max(|\mu|,|\nu|) \geq 2$. However, this implies that 
$\Phi_\D(\alpha(P_\mu))\Phi_\D(\alpha(P_\nu)) = 0$ for all $\mu, \nu \in W_2$ non-empty and orthogonal. 
The conclusion now follows from Lemma~\ref{lem:mult dom}.
\end{proof}

Although, at this point, it is not clear whether the same conclusion holds if we only assume that $\alpha(F) \subseteq V$, we can at least say the following.

\begin{prop}
Let $\alpha\in\en(\CO_2)$ be  such that $\alpha(F) \subseteq V$. Then for any $\mu,\nu,\kappa \in W_2$ non-empty, mutually orthogonal multi-indices, 
we have 
\[
\Phi_\D(\alpha(P_\mu))\Phi_\D(\alpha(P_\nu))\Phi_\D(\alpha(P_\kappa)) = 0.
\]
\end{prop}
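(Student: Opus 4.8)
The plan is to reduce the vanishing of the triple product to the \emph{pairwise} vanishing results encoded in Lemmas~\ref{lem:special element F} and~\ref{lem:orth proj alpha}, exploiting that all three factors $\Phi_\D(\alpha(P_\mu))$, $\Phi_\D(\alpha(P_\nu))$, $\Phi_\D(\alpha(P_\kappa))$ lie in the abelian algebra $\CD_2$. Since the product is therefore symmetric, I may relabel the three multi-indices so that $\mu \prec \nu \prec \kappa$ in lexicographic order; then $\nu$ lies strictly between $\mu$ and $\kappa$ and is orthogonal to both, so it serves as the intermediate element required by the hypothesis of Lemma~\ref{lem:special element F} applied to the outer pair $(\mu,\kappa)$. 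The mechanism I will use repeatedly is this: if for orthogonal $\sigma \prec \tau$ there is an intermediate orthogonal multi-index and either $\tau \neq 2(k)$ or $\sigma \neq 1(k)$, then Lemma~\ref{lem:special element F} produces $w \in F$ fixing one of $P_\sigma, P_\tau$ and moving the other off itself; since $\alpha(F) \subseteq V$ we have $\alpha(w) \in V$, and Lemma~\ref{lem:orth proj alpha} gives $\Phi_\D(\alpha(P_\sigma))\alpha(P_\tau) = 0$ (or with roles reversed). Applying $\Phi_\D$ once more, and using that $\Phi_\D(\alpha(P_\sigma)) \in \CD_2$ lies in the multiplicative domain of $\Phi_\D$, upgrades this to the pairwise identity $\Phi_\D(\alpha(P_\sigma))\Phi_\D(\alpha(P_\tau)) = 0$, which immediately kills the entire triple product.

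First I would dispose of the generic case by applying this mechanism to the outer pair $(\mu,\kappa)$ with witness $\nu$: if $\kappa \neq 2(k)$ for all $k$ the first part of Lemma~\ref{lem:special element F} applies, and if $\mu \neq 1(k)$ for all $k$ the second part applies. In either situation I obtain $\Phi_\D(\alpha(P_\mu))\Phi_\D(\alpha(P_\kappa)) = 0$, and the triple product vanishes.

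This leaves the single obstructive configuration $\mu = 1(k_2)$ and $\kappa = 2(k_1)$, the extreme left and right ``pure'' multi-indices. Here I would first record that, since $\nu$ is orthogonal to $\mu = 1(k_2)$ and to $\kappa = 2(k_1)$, it can be neither a power of $1$ nor a power of $2$; this is exactly what the inner-pair arguments consume. The naive attempt to apply Lemma~\ref{lem:special element F} to $(\mu,\nu)$ or $(\nu,\kappa)$ needs an intermediate orthogonal multi-index, which need not exist (in the extreme ``tiling'' situation $P_\mu + P_\nu + P_\kappa = 1$ there is no room on either side). To manufacture the missing room I would subdivide the right endpoint, writing $P_\kappa = P_{\kappa 1} + P_{\kappa 2}$ and splitting the triple product accordingly. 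For the $\kappa 1$-term the multi-index $\kappa 1 = 2(k_1)1$ is no longer a power of $2$, so the outer pair $(\mu, \kappa 1)$ with witness $\nu$ falls under the generic case and kills $\Phi_\D(\alpha(P_\mu))\Phi_\D(\alpha(P_{\kappa 1}))$. For the $\kappa 2$-term the piece $\kappa 1$ now sits strictly between $\nu$ and $\kappa 2$ and is orthogonal to both, supplying the previously missing intermediate; since $\nu \neq 1(k)$, the second part of Lemma~\ref{lem:special element F} applied to $(\nu,\kappa 2)$ yields $\Phi_\D(\alpha(P_\nu))\Phi_\D(\alpha(P_{\kappa 2})) = 0$. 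Both terms thus vanish.

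The hard part, and the only place where genuine care is needed, is this last configuration: the pairwise products simply \emph{cannot} all be forced to vanish when $\alpha$ merely preserves $F$ rather than $V$, and the subdivision trick is precisely what lets the weaker triple-product statement go through. I would verify the lexicographic and orthogonality bookkeeping carefully — that $\nu \prec \kappa 1 \prec \kappa 2$, that $\kappa 1$ and $\kappa 2$ inherit orthogonality to $\mu$ and $\nu$ from $\kappa$, and that $\nu$ remains a legitimate witness for the pair $(\mu, \kappa 1)$ — since these are exactly the hypotheses drawn upon in Lemmas~\ref{lem:special element F} and~\ref{lem:orth proj alpha}.
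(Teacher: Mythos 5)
Your proof is correct, and it is assembled from the same two ingredients as the paper's own argument: Lemma~\ref{lem:special element F} supplies a suitable $w\in F$, Lemma~\ref{lem:orth proj alpha} converts it into $\Phi_\D(\alpha(P_\sigma))\alpha(P_\tau)=0$, and applying $\Phi_\D$ once more (using that $\Phi_\D(\alpha(P_\sigma))\in\CD_2$ and the $\CD_2$-bimodule property of the expectation) yields the pairwise identity $\Phi_\D(\alpha(P_\sigma))\Phi_\D(\alpha(P_\tau))=0$, which annihilates the triple product in the abelian algebra $\CD_2$. Where you genuinely depart from the paper is the exceptional case, and this is to your credit. The published proof consists of the single assertion that at least one of the pairs $(\mu,\nu)$, $(\mu,\kappa)$, $(\nu,\kappa)$ satisfies the hypotheses of Lemma~\ref{lem:special element F}; that assertion fails in general. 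For $(\mu,\nu,\kappa)=(1,21,22)$ the inner pairs $(1,21)$ and $(21,22)$ admit no intermediate multi-index orthogonal to both members, since the corresponding dyadic intervals are adjacent, while the outer pair $(1,22)$ does have the witness $21$ but consists of a power of $1$ and a power of $2$, so neither half of Lemma~\ref{lem:special element F} applies to it. Your subdivision $P_\kappa=P_{\kappa 1}+P_{\kappa 2}$ handles exactly this configuration: the pair $(\mu,\kappa 1)$ with witness $\nu$ falls under the first half of the lemma because $\kappa 1$ is no longer a power of $2$, and the pair $(\nu,\kappa 2)$ with witness $\kappa 1$ falls under the second half because $\nu$, being orthogonal to $1(k_2)$, cannot be a power of $1$; splitting $\Phi_\D(\alpha(P_\kappa))$ accordingly kills each of the two resulting summands of the triple product. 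The order and orthogonality bookkeeping you flag ($\nu\prec\kappa 1\prec\kappa 2$, with $\kappa 1,\kappa 2$ inheriting orthogonality to $\mu$ and $\nu$ from $\kappa$) checks out. In short, your argument not only proves the proposition but repairs a genuine gap in the paper's one-line proof.
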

\begin{proof}
The claim follows directly from Lemma~\ref{lem:orth proj alpha} and 
Lemma~\ref{lem:special element F}, as at least one of the pairs $(\mu,\nu), (\mu,\kappa)$ and $(\nu,\kappa)$ satisfies the assumptions of Lemma~\ref{lem:special element F}.
\end{proof}

The remaining three results of this paper, Proposition \ref{fincl}, Proposition \ref{prop:T -> V} and Theorem \ref{mainresult} below, 
give information about  those unitaries $u\in\U(\OO_2)$ for which $\lambda_u(F)\subseteq V$, $\lambda_u(T)\subseteq V$ and 
$\lambda_u(V)\subseteq V$, respectively.

\begin{prop}\label{fincl}
Let $u \in \CU(\CO_2)$ be such that $\lambda_u(F) \subseteq V$. Then $u \in V$ if and only if $\lambda_u(\CD_2) \subseteq \CD_2$ and 
$\lambda_u(S_1S_2^*) \in \CV_2$.
\end{prop}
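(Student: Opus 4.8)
The plan is to prove both implications, the forward one being routine and the reverse one carrying all the content.

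For the forward direction, assume $u\in V=\CV_2\cap\CU(\CO_2)$. Since $V=\SS_2\subseteq\CN_{\CO_2}(\CD_2)$, we have $\lambda_u(P_i)=uP_iu^*\in\CD_2$, and an induction on $|\mu|$ using $\lambda_u(P_{i\mu})=uS_i\,\lambda_u(P_\mu)\,S_i^*u^*$ together with $S_i\CD_2S_i^*\subseteq\CD_2$ and $u\in\CN_{\CO_2}(\CD_2)$ shows $\lambda_u(\CD_2)\subseteq\CD_2$. Moreover $\lambda_u(S_1S_2^*)=uS_1S_2^*u^*\in\CV_2$, because $\CV_2$ is a $*$-subring containing $u$, $S_1S_2^*$ and $u^*$. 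This gives both conditions.

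For the reverse direction, I would write $u=\lambda_u(S_1)S_1^*+\lambda_u(S_2)S_2^*$ and set $V_1=\lambda_u(S_1)$, $V_2=\lambda_u(S_2)$, two isometries with $V_1V_1^*+V_2V_2^*=1$. First I would extract the level-one data: from $\lambda_u(S_1S_2^*)\in\CV_2$, hence also $\lambda_u(S_2S_1^*)\in\CV_2$, and the identity $\lambda_u(S_1S_2^*)\lambda_u(S_2S_1^*)=\lambda_u(P_1)$, combined with $\lambda_u(\CD_2)\subseteq\CD_2$, one obtains $\lambda_u(P_1),\lambda_u(P_2)\in\CV_2\cap\CD_2$, i.e. finite sums of orthogonal cylinder projections. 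Introducing the auxiliary Cuntz family $T_i=uS_iu^*$ one has $T_iT_j^*=\lambda_u(S_iS_j^*)\in\CV_2$, so by multiplicativity of these matrix units $uP_\mu u^*=T_\mu T_\mu^*\in\CV_2$ for every multi-index $\mu$.

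The crux is to upgrade this to $u\in\CN_{\CO_2}(\CD_2)$, namely to show $uP_\mu u^*\in\CD_2$ for all $\mu$. The hard part will be exactly this bridge: the hypothesis $\lambda_u(\CD_2)\subseteq\CD_2$ controls the projections $\lambda_u(P_\mu)=\lambda_u(S_\mu)\lambda_u(S_\mu)^*$, whereas normalization concerns the \emph{different} projections $uP_\mu u^*=T_\mu T_\mu^*$, and the two coincide only when $|\mu|=1$. The plan is to run an induction on $|\mu|$, exploiting that the level-one partial isometry $\xi=\lambda_u(S_1S_2^*)\in\CV_2$ satisfies $\xi\xi^*=\lambda_u(P_1)$ and $\xi^*\xi=\lambda_u(P_2)\in\CD_2$, so that $\xi$ is a graded partial permutation normalizing $\CD_2$; together with the level-one identities this should force each projection $T_\mu T_\mu^*\in\CV_2$ to lie in $\CD_2$, with $\lambda_u(\CD_2)\subseteq\CD_2$ anchoring the inductive step. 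I expect this to be the main obstacle, and the point where both conditions are genuinely needed.

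Once $u\in\CN_{\CO_2}(\CD_2)$ is established, I would invoke the description of the normalizer from \cite{P} and \cite{CS} to write $u=vd$ with $v\in V$ and $d\in\CU(\CD_2)$. Since $\CV_2\cap\CU(\CD_2)=\{1\}$, because a diagonal unitary in $\CV_2$ must equal $1$ as $\Phi_\D$ would otherwise produce a nonnegative integer combination whose character values are all $1$, we get $u\in\CV_2$ if and only if $d=1$. To force $d=1$ I would argue by contradiction exactly as in the proof of Theorem \ref{mainauto}: choose a spectral value $t\neq1$ of $d$ of modulus one, approximate $d$ by $tP_\beta+x$ for a suitable multi-index $\beta$, pick $w\in F$ adapted to $\beta$ so that $\lambda_u(w)\in\CV_2$ by hypothesis, and observe that a suitable $\Phi_\D$-image must have all its character values in the nonnegative integers, contradicting its proximity to $t$. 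This yields $d=1$, whence $u=v\in V$, completing the proof.
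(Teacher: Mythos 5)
Your forward direction is fine, but the reverse direction has a genuine gap at exactly the point you yourself flag as ``the crux,'' together with one incorrect intermediate claim. The claim that $uP_\mu u^*=T_\mu T_\mu^*\in\CV_2$ for every multi-index $\mu$ ``by multiplicativity of these matrix units'' does not follow: products and sums of the elements $T_iT_j^*=uS_iS_j^*u^*$ never leave $u\F_2^1u^*$ (they are level-one matrix units), so they do not produce $T_\mu T_\mu^*=uP_\mu u^*$ for $|\mu|\geq 2$; to climb from level $k$ to level $k+1$ you would have to conjugate by $T_i=uS_iu^*$, and you do not know that $uS_iu^*$, or anything conjugated by it, lies in $\CV_2$ --- that is essentially the conclusion $u\in\CV_2$ you are after. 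The subsequent induction that is supposed to force $T_\mu T_\mu^*\in\CD_2$ (hence $u\in\CN_{\CO_2}(\CD_2)$) is only announced, not carried out, and it cannot be carried out from the two stated hypotheses alone: $\lambda_u(\CD_2)\subseteq\CD_2$ controls the projections $\lambda_u(P_\mu)$, and $\lambda_u(S_1S_2^*)\in\CV_2$ controls level one, but nothing in your argument propagates $\CV_2$-membership to longer words. The hypothesis $\lambda_u(F)\subseteq V$, which is the engine of the actual proof, goes unused in your outline until the final $d=1$ step. A further (secondary) problem is that even granting $u\in\CN_{\CO_2}(\CD_2)$ and $u=vd$ with $v\in V$, $d\in\CU(\CD_2)$, one does not get $\lambda_u=\lambda_v\circ\lambda_d$ for a mere endomorphism (that composition corresponds to the unitary $\lambda_v(d)v$, and $d'=v^*u$ need not lie in $\lambda_v(\CD_2)$), so the contradiction argument of Theorem \ref{mainauto} does not transplant verbatim.

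For contrast, the paper's proof never passes through the normalizer at all. It shows directly that $\lambda_u(S_\mu S_\nu^*)\in\CV_2$ for all non-empty $\mu,\nu$: whenever $S_\mu S_\nu^*$ extends to an element $w=S_\mu S_\nu^*+v$ of $F$ (which happens precisely when $\mu,\nu$ avoid the constant words $1(k),2(k)$, or are both constant of the same letter), one writes $\lambda_u(S_\mu S_\nu^*)=\lambda_u(P_\mu)\lambda_u(w)\in\CP(\CD_2)\cdot V\subseteq\CV_2$, using $\lambda_u(F)\subseteq V$; the remaining pairs are reached by multiplying such elements against $\lambda_u(S_1S_2^*)$, $\lambda_u(S_2S_1^*)$ and projections $\lambda_u(P_{i(k)})\in\CP(\CD_2)$. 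Once $\lambda_u(\CV_2)\subseteq\CV_2$ is known, $u=\sum_i\lambda_u(S_i)S_i^*=\sum_{i,j}\lambda_u(S_iS_jS_j^*)S_i^*\in\CV_2\cap\CU(\CO_2)=V$ immediately. This case analysis with elements of $F$ is exactly the missing bridge in your proposal.
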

\begin{proof}
If $u \in V$ then clearly $\lambda_u(\CV_2) \subseteq \CV_2$, which shows that the ``only if"-direction is trivial. For the converse, assume that 
$\lambda_u(\CD_2) \subseteq \CD_2$ and $\lambda_u(S_1S_2^*) \in \CV_2$. We first show that $\lambda_u(\CV_2) \subseteq \CV_2$. For this, it is enough to show that $\lambda_u(S_\mu S_\nu^*) \in \CV_2$ for all non-empty multi-indices $\mu, \nu \in W_2$.

Assume first that $\mu, \nu \in W_2$ are non-empty multi-indices with the property that there exists some partial isometry $v \in \CV_2$ such that $w = S_\mu S_\nu^* + v \in F$. This is exactly the case if one of the following three cases is satisfied:
\begin{itemize}
\item[(i)] $(\mu,\nu) = (1(k),1(\ell))$ for some $k,\ell \geq 1$;
\item[(ii)] $(\mu,\nu) = (2(k),2(\ell))$ for some $k,\ell \geq 1$;
\item[(iii)] $1(k) \neq \mu \neq 2(k)$ and $1(\ell) \neq \nu \neq 2(\ell)$ for all $k,\ell \geq 1$.
\end{itemize}
In either of these cases,
\[
\lambda_u(S_\mu S_\nu^*) = \lambda_u(P_\mu w) = 
\lambda_u(P_\mu)\lambda_u(w) \in \CP(\CD_2) \cdot V \subseteq \CV_2.
\]

Let us now check the cases where neither of the conditions (i)-(iii) are satisfied. By assumption,
\[
\lambda_u(S_{21(k-1)}S_{1(k)}^*) = \lambda_u(S_2S_1^*)\lambda_u(P_{1(k)}) \in \CV_2 \cdot \CP(\CD_2) \subseteq  \CV_2
\]
for every $k \geq 1$. Using that $\CV_2$ is $*$-invariant, it follows from a similar argument that $\lambda_u(S_{12(k-1)} S_{2(k)}^*) \in \CV_2$ for every $k \geq 1$. Now let $\mu \in W_2$ be a non-empty multi-index such that $1(\ell) \neq \mu \neq 2(\ell)$ for all $\ell \geq 1$. Then $(\mu,12(k))$ satisfies (iii) for all $k \geq 1$ and we obtain that
\[
\lambda_u(S_\mu S_{2(k)}^*) = \lambda_u(S_\mu S_{12(k)}^*)\lambda_u(S_{12(k)}S_{2(k+1)}^*)\lambda_u(S_{2(k+1)}S_{2(k)}^*) \in \CV_2.
\]
Similarly, $\lambda_u(S_\mu S_{1(k)}^*) \in \CV_2$ for all $k \geq 1$. Furthermore, this in turn shows that
\[
\lambda_u(S_{2(\ell)}S_{1(k)}^*) = \lambda_u(S_{2(\ell)} S_{12}^*) \lambda_u(S_{12} S_{1(k)}^*) \in \CV_2
\]
for all $k,\ell \geq 1$. Consequently, $\lambda_u(S_{1(k)}S_{2(\ell)}^*) \in \CV_2$ for all $k,\ell \geq 1$ as well. This covers all cases where neither of the conditions (i)-(iii) are satisfied and $\lambda_u(\CV_2) \subseteq \CV_2$ follows.

This now implies that for $i = 1,2$,
\[
\lambda_u(S_i) = \sum_{j=1}^2 \lambda_u(S_i)\lambda_u(S_j S_j^*) = \sum_{j=1}^2 \lambda_u(S_iS_jS_j^*) \in \CV_2.
\]
Thus,
\[
u = \sum_{i = 1}^2 \lambda_u(S_i)S_i^* \in \CV_2 \cap \CU(\CO_2) = V.
\]
This concludes the proof.
\end{proof}

Replacing the Thompson group $F$ with $T$ in Proposition \ref{fincl} above leads to the following simplified condition. 

\begin{prop} \label{prop:T -> V}
Let $u \in \CU(\CO_2)$ be such that $\lambda_u(T) \subseteq V$. Then $u \in V$ if and only if $\lambda_u(\CD_2) \subseteq \CD_2$.
\end{prop}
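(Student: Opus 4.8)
The plan is to derive this from Proposition~\ref{fincl}, exploiting that $T$, unlike $F$, contains the flip-flop element $c = S_1S_2^* + S_2S_1^*$.

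First I would dispose of the ``only if'' direction, which is exactly as in Proposition~\ref{fincl}: if $u \in V = \CS_2 \subseteq \CV_2$, then $\lambda_u(\CV_2) \subseteq \CV_2$ because $\CV_2$ is a $*$-ring containing each $\lambda_u(S_i) = uS_i$. Moreover a short induction on $|\mu|$, peeling off the first letter of $\mu$ and using that $u$ normalizes $\CD_2$ together with $S_i\CD_2S_i^* \subseteq \CD_2$, shows $\lambda_u(P_\mu) \in \CD_2$ for every $\mu$, whence $\lambda_u(\CD_2) \subseteq \CD_2$ by continuity.

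The substance lies in the ``if'' direction. Assuming $\lambda_u(T) \subseteq V$ and $\lambda_u(\CD_2) \subseteq \CD_2$, I would first observe that $F \subseteq T$ forces $\lambda_u(F) \subseteq V$, so the standing hypothesis of Proposition~\ref{fincl} is satisfied; it then remains only to verify the extra condition $\lambda_u(S_1S_2^*) \in \CV_2$ demanded there. The key point is that $S_1S_2^* = P_1c$, so by multiplicativity of $\lambda_u$ we get $\lambda_u(S_1S_2^*) = \lambda_u(P_1)\lambda_u(c)$. Here $\lambda_u(c) \in V \subseteq \CV_2$ since $c \in T$, while $\lambda_u(P_1)$ is a projection in $\CD_2$ by hypothesis; as every projection of $\CD_2$ is a finite sum of projections $P_\mu$ (the fact already used in Remark~\ref{rem:eins_w}), it lies in $\CV_2$. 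Since $\CV_2$ is a ring, $\lambda_u(S_1S_2^*) \in \CV_2$, and Proposition~\ref{fincl} then yields $u \in V$.

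I do not anticipate a genuine obstacle: the whole content is the single idea that the flip-flop belongs to $T$ and recovers $S_1S_2^*$ upon multiplication by the diagonal projection $P_1$, thereby supplying for free the condition $\lambda_u(S_1S_2^*) \in \CV_2$ that had to be assumed separately in the $F$-version. The one step meriting a word of care is that $\lambda_u(P_1)$, a priori merely a projection of $\CD_2$, actually lies in $\CV_2$; this rests on the standard identification of $\CD_2$ with $C(X_2)$ for the Cantor set $X_2$, every clopen subset of which is a finite union of cylinders.
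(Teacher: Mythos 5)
Your proof is correct, but it takes a different route from the paper's. The paper proves the ``if'' direction directly and does not pass through Proposition~\ref{fincl} at all: for each $i,j\in\{1,2\}$ it picks a unitary $w_j\in T$ with $w_jP_j=S_iS_jS_j^*$, writes $\lambda_u(S_i)=\sum_{j}\lambda_u(w_j)\lambda_u(P_j)\in V\cdot\CP(\CD_2)\subseteq\CV_2$, and concludes $u=\sum_i\lambda_u(S_i)S_i^*\in\CV_2\cap\CU(\CO_2)=V$. You instead treat Proposition~\ref{fincl} as a black box and observe that the only hypothesis there which is not automatic from $\lambda_u(T)\subseteq V$ is $\lambda_u(S_1S_2^*)\in\CV_2$, which the flip-flop $c=S_1S_2^*+S_2S_1^*\in T\setminus F$ supplies via $S_1S_2^*=P_1c$, so that $\lambda_u(S_1S_2^*)=\lambda_u(P_1)\lambda_u(c)\in\CP(\CD_2)\cdot V\subseteq\CV_2$. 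Both arguments are sound, and both rest on the same underlying mechanism (multiplying elements of $T$ by diagonal projections to isolate monomials $S_\alpha S_\beta^*$, together with the fact that projections in $\CD_2$ are finite sums of $P_\mu$'s); your version is conceptually tidier in that it pinpoints the flip-flop as exactly the extra ingredient $T$ provides over $F$, at the cost of importing the lengthy case analysis inside Proposition~\ref{fincl}, whereas the paper's direct computation is shorter and self-contained. The small points you flag --- that $c\in T$, and that a projection of $\CD_2\cong C(X_2)$ is a finite sum of cylinder projections --- are both used elsewhere in the paper and are unproblematic.
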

\begin{proof}
We only have to prove the ``if"-direction. Let $u \in \CU(\CO_2)$ be such that $\lambda_u(T) \subseteq V$ and $\lambda_u(\CD_2) \subseteq \CD_2$. Let $i \in \{1,2\}$. For $j\in\{1,2\}$, there clearly exists a partial isometry $v_j \in \CV_2$ such that $w_j = S_iS_jS_j^* + v_j \in T$. By assumption, it holds for $i=1,2$ that
\[
\lambda_u(S_i) = \sum_{j=1}^2 \lambda_u(S_i)\lambda_u(S_j S_j^*) = \sum_{j=1}^2 \lambda_u(S_iS_jS_j^*) = \sum_{j = 1}^2 \lambda_u(w_j)\lambda_u(P_j) \in \CV_2.
\]
Hence we conclude that
\[
u = \sum_{i=1}^2 \lambda_u(S_i)S_i^* \in \CV_2 \cap \CU(\CO_2) =  V,
\]
which finishes the proof.
\end{proof}

Now, we are ready to give the following interesting result. 

\begin{theorem}\label{mainresult} 
Let $u \in \CU(\CO_2)$. Then $\lambda_u(V) \subseteq V$ if and only if $u \in V$.
\end{theorem}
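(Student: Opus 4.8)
The plan is to derive the theorem as a short consequence of the earlier Proposition~\ref{fincl} and Lemma~\ref{prop:V->V implies D_2 -> D_2}, so that the only genuinely new work is verifying one remaining hypothesis.

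For the ``if'' direction I would argue directly. If $u \in V = \CV_2 \cap \CU(\CO_2)$, then since $\CV_2$ is a $*$-subring of $\CO_2$ containing every generator $S_i$, each product $uS_i$ lies in $\CV_2$, and hence $\lambda_u(S_\mu) = (uS_{\mu_1})(uS_{\mu_2})\cdots(uS_{\mu_k}) \in \CV_2$. Taking adjoints and products gives $\lambda_u(S_\mu S_\nu^*) = \lambda_u(S_\mu)\lambda_u(S_\nu)^* \in \CV_2$, so $\lambda_u(\CV_2) \subseteq \CV_2$. As $\lambda_u$ carries unitaries to unitaries, restricting to $V = \CV_2 \cap \CU(\CO_2)$ yields $\lambda_u(V) \subseteq V$.

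For the substantive ``only if'' direction, assume $\lambda_u(V) \subseteq V$. Because $F \subseteq V$, this gives $\lambda_u(F) \subseteq V$, and Lemma~\ref{prop:V->V implies D_2 -> D_2} already yields $\lambda_u(\CD_2) \subseteq \CD_2$. By Proposition~\ref{fincl} it then suffices to check the single remaining condition $\lambda_u(S_1 S_2^*) \in \CV_2$. The key observation I would use is the factorization $S_1 S_2^* = P_1 f$, where $f = S_1 S_2^* + S_2 S_1^*$ is the flip-flop unitary, which belongs to $T \subseteq V$. Applying $\lambda_u$ gives $\lambda_u(S_1 S_2^*) = \lambda_u(P_1)\lambda_u(f)$, where $\lambda_u(f) \in V \subseteq \CV_2$ by hypothesis and $\lambda_u(P_1)$ is a projection in $\CD_2$ by the established inclusion $\lambda_u(\CD_2) \subseteq \CD_2$.

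To conclude I would invoke the elementary fact that every projection in $\CD_2$ is a finite sum of mutually orthogonal projections of the form $P_\mu$: since $\CD_2 \cong C(X_2)$ with $X_2$ the Cantor space, projections correspond to clopen sets, and every clopen subset of $X_2$ is a finite union of cylinder sets. Hence $\lambda_u(P_1) \in \CV_2$, and as $\CV_2$ is closed under multiplication we obtain $\lambda_u(S_1 S_2^*) = \lambda_u(P_1)\lambda_u(f) \in \CV_2$. Proposition~\ref{fincl} then delivers $u \in V$, finishing the argument. The only new input beyond the cited results is this factorization $S_1 S_2^* = P_1 f$ combined with the remark that $\lambda_u$ sends diagonal projections to finite sums of $P_\mu$'s; I expect the verification $\lambda_u(S_1 S_2^*) \in \CV_2$ to be the sole point requiring any care, and it is routine.
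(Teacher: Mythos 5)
Your proof is correct, and both directions check out: the factorization $S_1S_2^* = P_1 f$ with $f = S_1S_2^* + S_2S_1^*$ the flip-flop unitary is valid, $f$ does lie in $T \subseteq V$, and every projection in $\CD_2$ is indeed a finite sum of mutually orthogonal $P_\mu$'s, so $\lambda_u(P_1)\lambda_u(f) \in \CV_2$ as you claim. The route differs slightly from the paper's: the paper deduces the theorem from Lemma~\ref{prop:V->V implies D_2 -> D_2} together with Proposition~\ref{prop:T -> V}, which already says that $\lambda_u(T) \subseteq V$ plus $\lambda_u(\CD_2) \subseteq \CD_2$ forces $u \in V$ with no further hypothesis to check; since $T \subseteq V$, the theorem is then immediate. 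You instead go through Proposition~\ref{fincl}, whose hypothesis $\lambda_u(F) \subseteq V$ is weaker but which carries the extra condition $\lambda_u(S_1S_2^*) \in \CV_2$, and you discharge that condition by exploiting an element of $T \setminus F$ (the flip-flop). In effect your verification reproduces, for this one element, the same device the paper uses inside the proof of Proposition~\ref{prop:T -> V} (writing $S_iS_jS_j^*$ as $w_j P_j$ with $w_j \in T$), so the two arguments are close cousins; the paper's choice of intermediate proposition just makes the final deduction a one-liner, while yours requires the small additional computation. Both are sound.
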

\begin{proof}
This is an immediate corollary to Lemma \ref{prop:V->V implies D_2 -> D_2} and Proposition \ref{prop:T -> V}. 
\end{proof}

\bigskip\noindent \\
Sel\c{c}uk Barlak\\
Department of Mathematics and Computer Science \\
The University of Southern Denmark \\
Campusvej 55, DK--5230 Odense M, Denmark \\
E-mail: barlak@imada.sdu.dk \\

\medskip\noindent
Jeong Hee Hong \\
Department of Data Information \\
Korea Maritime and Ocean University \\
Busan 49112, South Korea \\
E-mail: hongjh@kmou.ac.kr \\

\medskip\noindent
Wojciech Szyma{\'n}ski\\
Department of Mathematics and Computer Science \\
The University of Southern Denmark \\
Campusvej 55, DK--5230 Odense M, Denmark \\
E-mail: szymanski@imada.sdu.dk


\begin{thebibliography}{12} 

\bibitem{BRS} S. Barlak and M. Ramirez-Solano, in preparation. 

\bibitem{BB} J. M. Belk and K. S. Brown, 
{\it Forest diagrams for elements of Thompson's group $F$}, 
Internat. J. Algebra Comput. {\bf 15} (2005), 815--850. 
%
\bibitem{B} J.-C. Birget, 
{\it The groups of Richard Thompson and complexity}, 
International Conference on Semigroups and Groups in honor of the 65th birthday of Prof. John Rhodes. 
Internat. J. Algebra Comput. {\bf 14} (2004), 569--626. 
%
%\bibitem{BS} M. Brin and C. Squier, 
%{\it Groups of piecewise linear homeomorphisms of the real line}, 
%Invent. Math. {\bf 79} (1985), 485--498. 
%
\bibitem{CFP} J. W. Cannon, W. J. Floyd and W. R. Parry,  
{\it Introductory notes on Richard Thompson's groups}, 
Enseign. Math. (2) {\bf 42} (1996), 215--256. 
%
\bibitem{CRS} R. Conti, M. R{\o}rdam and W. Szyma{\'n}ski, 
{\it Endomorphisms of ${\mathcal O}_n$ which preserve the canonical UHF-subalgebra}, 
J. Funct. Anal.  {\bf 259} (2010), 602--617. 

%\bibitem{CHS} R. Conti, J. H. Hong and W. Szyma{\'n}ski, 
%{\it The Weyl group of the Cuntz algebra,} 
%Adv. Math. {\bf 231} (2012), 3147--3161.

\bibitem{CS} R. Conti and W. Szyma{\'n}ski, 
{\it Labeled trees and localized automorphisms of the Cuntz algebras}, 
Trans. Amer. Math. Soc. {\bf 363} (2011), 5847--5870. 

\bibitem{Cun} J. Cuntz,
{\it Simple $C^*$-algebras generated by isometries},
Commun. Math. Phys. {\bf 57} (1977), 173--185.

\bibitem{Cun2} J. Cuntz,
{\it Automorphisms of certain simple $C^*$-algebras},
in {\it Quantum fields -- algebras, processes}, ed. L. Streit, 187--196, Springer, 1980.

\bibitem{HHRS} S. Haagerup, U. Haagerup and M. Ramirez-Solano, 
{\it A computational approach to the Thompson group F}, 
Internat. J. Algebra Comput. {\bf 25}  (2015),  381--432. 

\bibitem[HO]{HO} U. Haagerup and K. K. Olesen, 
{\it Non-inner amenability of the Thompson groups $T$ and $V$},
J. Funct. Anal. {\bf 272} (2017), 4838--4852. 

\bibitem{Hay} T. Hayashi, 
{\it On normalizers of $C^{*}$-subalgebras in the Cuntz algebra $\mathcal{O}_{n}$},   
J. Operator Theory {\bf 69}  (2013),  525--533.  

\bibitem{HHSz} T. Hayashi, J. H. Hong and W. Szyma\'{n}ski, 
{\it On endomorphisms of the Cuntz algebra which preserve the canonical UHF-subalgebra, II,}  
J. Funct. Anal. {\bf 272} (2017), 759--775.
%
\bibitem{Hig} G. Higman, 
{\it Finitely presented infinite simple groups}, 
Notes on Pure Math. {\bf 8}, Australian National Univ., Canberra, 1974. 
%
%\bibitem{HSS} J. H. Hong, A. Skalski and W. Szyma{\'n}ski, 
%{\it On invariant MASAs for endomorphisms of the Cuntz algebras}, 
%Indiana Univ. Math. J. {\bf 59} (2010), 1873--1892. 
%
\bibitem{N} V. Nekrashevych, 
{\it Cuntz-Pimsner algebras of group actions}, 
J. Operator Theory {\bf 52} (2004), 223--249. 

\bibitem{P} S. C. Power,
{\it Homology for operator algebras, III. Partial isometry homotopy and
triangular algebras}, New York J. Math. {\bf 4} (1998), 35--56.

\end{thebibliography}
\end{document}